\documentclass[a4paper]{cas-sc}

\usepackage[authoryear,longnamesfirst]{natbib}

\def\tsc#1{\csdef{#1}{\textsc{\lowercase{#1}}\xspace}}
\tsc{WGM}
\tsc{QE}

\newproof{pf}{Proof}
\newproof{pot}{Proof of Theorem \ref{thm}}

\newtheorem{theorem}{Theorem}
\newtheorem{proposition}{Proposition}
\newtheorem{corollary}{Corollary}
\newtheorem{lemma}{Lemma}

\newtheorem{example}{Example}

\newtheorem{remark}{Remark}
\DeclareMathOperator{\rank}{rank}
\DeclareMathOperator{\nullity}{nullity}
\begin{document}
\let\WriteBookmarks\relax
\def\floatpagepagefraction{1}
\def\textpagefraction{.001}

\shorttitle{}    

\shortauthors{}  

\title [mode = title]{Arithmetic Conditions for the Invertibility of Circulant Matrices over Finite Fields}  

\tnotemark[1] 


%

\author[1]{Jeong-Ok Choi}

\cormark[1]

\fnmark[1]

\ead{jchoi351@gist.ac.kr}

\ead[url]{}

\credit{}

\affiliation[1]{organization={Department of Mathematical Sciences, Gwangju Institute of Science and Technology},
            city={Gwangju},
            postcode={61005}, 
            country={South Korea}}

\author[2]{Youngmi Hur}

\fnmark[2]

\ead{yhur@yonsei.ac.kr}

\ead[url]{}

\credit{}

\affiliation[2]{organization={Department of Mathematics, Yonsei University},
            city={Seoul},
            postcode={03722}, 
            country={South Korea}}

\cortext[1]{Corresponding author}



\begin{abstract}
This paper investigates arithmetic conditions, over finite fields, for the invertibility of circulant matrices whose first row is \((a_0,\ldots,a_{n-1})\), in terms of linear constraints on the entries \(a_0,\ldots,a_{n-1}\). The coefficients of such linear constraints are determined by field traces arising from Frobenius orbits of roots of unity, and under additional assumptions they are given by Ramanujan sums reduced modulo the characteristic of the given field. These results provide finite-field analogues of the corresponding criteria over \(\mathbb{Q}\), where the relevant Ramanujan-sum linear constraints can be formulated without such additional assumptions, while also showing some differences caused by reduction modulo the characteristic. The constraints can often determine invertibility using aggregate information about the entries of a row, rather than the particular pattern of the entries. We also discuss the polynomial gcd criterion, which gives necessary and sufficient conditions for invertibility and yields further consequences for special families of circulant matrices.

\end{abstract}





\begin{keywords}
Finite fields\sep Ramanujan sums\sep Circulant matrices\sep
\end{keywords}

\maketitle

\section{Introduction}\label{sec:intro}
This paper investigates arithmetic conditions for the invertibility of circulant matrices over finite fields. Let
\[
C=C(a_0,a_1,\ldots,a_{n-1})
\]
be an $n\times n$ circulant matrix, where $(a_0,a_1,\ldots,a_{n-1})$ is its first row. The invertibility of circulant matrices has been studied from various perspectives in the literature \cite{kra12}. Our starting point is an invertibility criterion over $\mathbb Q$ obtained in \cite{LMA}, which is expressed in terms of Ramanujan-sum linear combinations of the first-row entries. More precisely, the relevant conditions have the form
\[
\sum_{j=0}^{n-1} R_d(j)a_j\ne 0
\]
where $R_d(j)$ denotes the Ramanujan sum, for divisors $d$ of $n$.

At first sight, one might expect that the criterion over $\mathbb Q$ should transfer directly to finite fields by reducing the Ramanujan-sum conditions modulo the characteristic. However, this expectation is misleading. One of the main reasons why the proof of the result obtained in \cite{LMA} cannot be carried over directly is that cyclotomic polynomials are not necessarily irreducible over finite fields. In fact, the factorization of cyclotomic polynomials over finite fields is a nontrivial problem \cite{meyn, wang}. Rather than working directly with cyclotomic polynomials, we investigate the relevant algebraic structure over finite fields through field traces. Assuming that the characteristic of the field $F$ is relatively prime to the matrix size $n$, as required for the trace construction used here, we obtain trace-based linear constraints without requiring any further conditions. Under the additional assumption that $\mathrm{ord}_d(\mathrm{char}(F))=\phi(d)$ for every divisor $d$ of $n$, these constraints coincide with the corresponding Ramanujan-sum conditions, yielding a criterion analogous to that obtained over $\mathbb{Q}$.
Even when this additional assumption is satisfied, the finite-field analogue may not always be as informative as the constraint over $\mathbb Q$ because an integer coefficient or a linear combination that is nonzero over $\mathbb Q$ may become zero after reduction modulo the characteristic, causing the corresponding constraint to become trivial. 

Moreover, the trace-based constraints provide sufficient conditions for invertibility, but they do not characterize invertibility completely. In particular, they do not address the necessary direction. To this end, we complement these results by considering the polynomial gcd criterion, which gives a necessary and sufficient condition for the invertibility of a circulant matrix over a finite field.

While this polynomial gcd criterion gives a complete characterization of invertibility, which is good in theory, in practice it may require explicit factorization or gcd computations in $F[x]$. By contrast, the trace-based criteria developed here often yield sufficient conditions that can be verified using only aggregate information about the first row. The trace theory and the polynomial gcd criterion used in this paper are standard tools, which we employ to establish our main results.

As a concrete illustration of our trace-based criterion, we show in Example~\ref{ex:inv} that for $q=7$ and $n=11^s$ with $s\ge 2$, the corresponding linear constraints of the \(0,1\)-circulant matrix depend only on aggregate quantities such as the total number of nonzero entries and the numbers of nonzero entries in prescribed divisibility classes. Hence the criterion applies to broad families of \(0,1\)-circulant matrices, rather than to isolated examples depending on a specific support pattern.

The paper is organized as follows. In Section 2, we develop trace-based criteria for invertibility over finite fields. We introduce finite-field analogues of Ramanujan sums and relate them to the trace conditions arising from Frobenius orbits. Under the condition $\mathrm{ord}_d(\mathrm{char}(F))=\phi(d)$, these trace conditions reduce to Ramanujan-sum congruences. Examples are included to illustrate both the limitations and the usefulness of the criteria: some rational-field constraints become trivial after reduction, while in other cases the finite-field conditions remain effective.

In Section 3, we discuss complete invertibility criteria via polynomial methods. We recall the polynomial gcd criterion and use it to derive rank and singularity consequences over arbitrary fields and finite fields. This section is not intended to replace the trace-based criteria of Section 2, but rather to provide the complete algebraic viewpoint and to record useful consequences for special cases, including \(0,1\)-circulant matrices, complements of such matrices, and the case where the size of the matrix is a power of the characteristic.

Throughout this paper, $\overline{F}$ denotes an algebraic closure of a field $F$, and for any positive integer $n \ge 2$ and an integer $a$ with $\gcd(a,n)=1$, 
we write $\mathrm{ord}_n(a)$ for the multiplicative order of $a$ modulo $n$, i.e.,
\[
\mathrm{ord}_n(a) := \min\{d \ge 1 : a^d \equiv 1 \pmod n\}.
\]
An integer $a$ with $\gcd(a,n)=1$ is called a primitive root modulo $n$ if $\mathrm{ord}_n(a)=\phi(n)$, where $\phi$ is Euler's totient function. Equivalently, the congruence class of $a$ modulo $n$ is a generator of the multiplicative group $(\mathbb Z/n\mathbb Z)^\times$, the group of units.

For $d\ge 1$ and $j \in \mathbb{Z}$, the Ramanujan sum is the integer
\[
R_d(j)\;:=\;\sum_{\substack{1\le a\le d\\\gcd(a,d)=1}} e^{2\pi i aj/d}\ \in\ \mathbb Z.
\]
For any field $F$, we write $\overline{R_d(j)}\in F$ for the image of $R_d(j)$ under the
natural ring homomorphism $\mathbb Z\to F$ by $n \longrightarrow n \cdot 1_F$, where $1_F$ is the unity of $F$. In particular, when $\mathrm{char}(F)>0$, we know that $\overline{R_d(j)}$ is the reduction of $R_d(j)$ modulo $\mathrm{char}(F)$.

\section{Invertibility Criteria via Trace and Ramanujan Sums}\label{sec:frob-trace}

Throughout this section, let $F=\mathbb{F}_q$  and let $n \ge 2$ be an integer such that $\gcd(\mathrm{char}(F),n)=1$. Let $C = C(a_0, \cdots, a_{n-1})$ over $F$ be an $n \times n$ circulant matrix with associated polynomial $f(x)=\sum_{j=0}^{n-1} a_j x^j$.


Fix a primitive $n$-th root of unity $\epsilon\in\overline{\mathbb{F}}_q$.
Note that $C$ is singular if and only if there exists $j\in\{0,1,\ldots,n-1\}$ such that
$f(\epsilon^j)=0$, since
\begin{equation}
\label{eq:det}
\det C \;=\; \prod_{j=0}^{n-1} f(\epsilon^j)\quad\text{in }\mathbb F_q
\end{equation}
(see, e.g., \cite[Section~2.1]{jitman2020}; see also
\cite[Section~2]{doublecirculant}). In this case, $\epsilon^j$ is a primitive $d$-th root of unity, where $d = n/\gcd(j,n)$. By letting $\zeta = \epsilon^j$ we obtain the following conclusion: $C$ is singular if and only if there exists $d \mid n$ and a primitive $d$-th root of unity $\zeta$ such that $f(\zeta) = 0$. We illustrate this criterion over a finite field with a concrete family.

\begin{example}
Let $p$ be an odd prime, and let $h\in \mathbb{F}_{p}$ be a primitive root modulo $p$. For each positive $m\ge 1$, consider the $(p-1) \times (p-1)$ circulant matrix $G_{p,m,h}=C(1,h^m, h^{2m}, \dots,h^{(p-2)m})$ over $\mathbb{F}_p$. Such matrices were introduced over $\mathbb{Q}$ in \cite{LMA} and their singularity was discussed there (see the paper for details). 

Since we immediately see that the matrices $G_{p,m,h}$ are all singular in $\mathbb{F}_{p}$ for any odd prime $p$ and a positive integer $m$, we consider the variant circulant matrices $K_{p,m,h}:=G_{p,m.h}-I_{p-1}$. For example, when $p=5$, we can pick $h=2$, and when $p=7$, $h=3$ can be used. Thus, 
\[
K_{5,m,2}=\left[\begin{array}{cccc} 
0&2^m&4^m&3^m\\ 
3^m&0&2^m&4^m\\
4^m&3^m&0&2^m\\
2^m&4^m&3^m&0
\end{array}\right] \hbox{in $\mathbb{F}_5$},\quad
K_{7,m,3}=\left[\begin{array}{cccccc} 
0&3^m&2^m&6^m&4^m&5^m\\ 
5^m&0&3^m&2^m&6^m&4^m\\
4^m&5^m&0&3^m&2^m&6^m\\
6^m&4^m&5^m&0&3^m&2^m\\
2^m&6^m&4^m&5^m&0&3^m\\
3^m&2^m&6^m&4^m&5^m&0
\end{array}\right] \hbox{in $\mathbb{F}_7$}.\]

Then, we can show that the matrix $K_{p,m,h}$ is invertible in $\mathbb{F}_{p}$. To see this, note that with $n:=p-1$, we have $\gcd(\mathrm{char}(\mathbb F_p), n) = \gcd(p, p-1)=1$ and the associated polynomial of $K_{p,m,h}$ is 
\[f(x)=\sum_{j=1}^{n-1}h^{mj} x^j.\] 
Note that all the $n$-th roots of unity are exactly the elements of the multiplicative group $\mathbb{F}_{p}^\times=\{1,2,\dots,p-1\}$. Now, pick any $\zeta\in\mathbb{F}_{p}^\times$. Since $h$ is a primitive element, $\zeta=h^k$ for some $0\le k\le n-1$. Thus, 
\[f(\zeta)=f(h^k)=\sum_{j=1}^{n-1}h^{mj} h^{kj}=\sum_{j=1}^{n-1}(h^{m+k})^j.\]
Let $r:=h^{m+k}$.
If $r=1$, then $f(\zeta)=\sum_{j=1}^{n-1}r^j=p-2\ne 0$ in $\mathbb F_p$. If $r\ne 1$, then 
\[f(\zeta)=\sum_{j=1}^{n-1}r^j=\frac{r^n-1}{r-1}-1
=\frac{(h^{m+k})^{p-1}-1}{h^{m+k}-1}-1=0-1\ne 0,\quad \mbox{in $\mathbb F_p$}.\]
Thus, the determinant of the matrix $K_{p,m,h}$ is nonzero by (\ref{eq:det}), and hence is invertible in $\mathbb{F}_{p}$ for any odd prime $p$ and positive integer $m$.
\end{example}

The example above is simple because all the relevant roots of unity already lie in $\mathbb{F}_q$. In general, they lie in extensions of $\mathbb{F}_q$, and we treat this case systematically in the next subsection.

\subsection{Trace-Based Criteria for Invertibility}

Let $\sigma$ be the Frobenius automorphism defined by
\[
\sigma:\overline{\mathbb{F}}_q\to\overline{\mathbb{F}}_q,\qquad x\mapsto x^{\,q}.
\]
It is well-known that $\sigma$ fixes $\mathbb F_q$. For every $\alpha\in\overline{\mathbb{F}}_q$, the (Frobenius) orbit of $\alpha$ is
\[
O(\alpha):=\{\sigma^i(\alpha) : i \ge 0\}
=\{\alpha^{q^i} : i \ge 0\}.
\]
Since $\alpha$ is algebraic over $\mathbb F_q$, $O(\alpha)$ is finite; its size is the least $t \ge 1$ such that
$\sigma^t(\alpha)=\alpha$, and equals $[\mathbb F_q(\alpha):\mathbb F_q]$.
In particular, if $\alpha=\zeta$ is a primitive $d$-th root of unity, then
\[
\sigma^t(\zeta)=\zeta
\iff \zeta^{q^t}=\zeta
\iff \zeta^{q^t-1}=1
\iff d \mid (q^t-1)
\iff q^t \equiv 1 \pmod d.
\]
Since we assume $\gcd(\mathrm{char}(\mathbb F_q), n) = 1$, we have $\gcd(q, d) = 1$ for any divisor $d$ of $n$. Thus, the smallest such $t \ge 1$ is given by $t = |O(\zeta)| = \mathrm{ord}_d(q)$ (\cite{ln}).

\begin{theorem}\label{thm:fb}
Let $C=C(a_0,a_1,\ldots,a_{n-1})$ over $\mathbb F_q$ with $\gcd(\mathrm{char}(\mathbb F_q), n) = 1$. 
If $C$ is singular over $\mathbb F_q$, then there exist a divisor $d\mid n$ and the orbit $O$
of a primitive $d$-th root of unity in $\overline{\mathbb{F}}_q$ such that
\begin{equation}
    \label{eq:thm1}
\sum_{j=0}^{n-1}\Bigl(\sum_{\gamma\in O}\gamma^{\,j}\Bigr)a_j=0
\quad\text{in }\mathbb F_q.
\end{equation}
\end{theorem}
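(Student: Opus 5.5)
We start from the singularity criterion established just before the theorem: since $\gcd(\mathrm{char}(\mathbb F_q),n)=1$, formula \eqref{eq:det} shows that $C$ is singular exactly when $f(\zeta)=0$ for some divisor $d\mid n$ and some primitive $d$-th root of unity $\zeta\in\overline{\mathbb F}_q$. We fix such a $d$ and $\zeta$, and take $O=O(\zeta)$, the Frobenius orbit of $\zeta$. By the discussion preceding the theorem, this orbit is $\{\zeta,\zeta^q,\ldots,\zeta^{q^{t-1}}\}$ with $t=\mathrm{ord}_d(q)$.

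Next we propagate the vanishing along the orbit. The coefficients $a_j$ lie in $\mathbb F_q$, and $\sigma$ is a field automorphism of $\overline{\mathbb F}_q$ fixing $\mathbb F_q$. Hence $\sigma^i(f(\zeta))=f(\sigma^i(\zeta))$ for every $i$, so $f(\gamma)=0$ for every $\gamma\in O$. Summing over the orbit gives
\[
0=\sum_{\gamma\in O} f(\gamma)=\sum_{\gamma\in O}\sum_{j=0}^{n-1}a_j\gamma^{j}=\sum_{j=0}^{n-1}\Bigl(\sum_{\gamma\in O}\gamma^{j}\Bigr)a_j ,
\]
which is exactly the identity \eqref{eq:thm1}, computed in $\overline{\mathbb F}_q$.

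The remaining point, and the only place needing care, is that the statement asserts \eqref{eq:thm1} holds \emph{in} $\mathbb F_q$. So we must check that each coefficient $c_j:=\sum_{\gamma\in O}\gamma^j$ lies in $\mathbb F_q$. The map $\sigma$ permutes $O$ cyclically, since $\sigma^t(\zeta)=\zeta$. Therefore
\[
\sigma(c_j)=\sum_{\gamma\in O}\sigma(\gamma)^j=c_j .
\]
An element of $\overline{\mathbb F}_q$ fixed by $\sigma$ is a root of $x^q-x$, hence lies in $\mathbb F_q$. Equivalently, $c_j=\mathrm{Tr}_{\mathbb F_q(\zeta)/\mathbb F_q}(\zeta^j)$, since the orbit lists the $t=[\mathbb F_q(\zeta):\mathbb F_q]$ distinct conjugates of $\zeta$ exactly once each, and so $\zeta^j$ has these same conjugates raised to the $j$-th power. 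Thus every $c_j$ and every $a_j$ lies in $\mathbb F_q$, so the linear relation holds in $\mathbb F_q$, which completes the argument.
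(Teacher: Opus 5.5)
Your proof is correct and follows the paper's argument essentially step for step: take a root $\zeta$ of $f$ among the $n$-th roots of unity, carry the vanishing along its Frobenius orbit, and sum. Your extra check that each coefficient $\sum_{\gamma\in O}\gamma^j$ is $\sigma$-fixed, and hence lies in $\mathbb F_q$, is a useful detail. The paper leaves this implicit in its proof and only identifies these coefficients as traces in the remark that follows.
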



\begin{pf}
Fix a primitive $n$-th root of unity $\epsilon\in\overline{\mathbb{F}}_q$. 
Since $C$ is singular over $\mathbb F_q$, the associated polynomial $f(x) \in \mathbb{F}_q[x]$ satisfies $f(\epsilon^j)=0$ for some
$j\in\{0,1,\ldots,n-1\}$. Let $\zeta =\epsilon^j$. Then $\zeta$ is a primitive $d$-th root of unity,
where $d=n/\gcd(j,n)$ is a divisor of $n$.

Let $\sigma$ be the Frobenius automorphism.
We observe that $\sigma(f(\alpha))=f(\sigma(\alpha))$ for all $\alpha\in\overline{\mathbb{F}}_q$, since $\sigma$ fixes $\mathbb F_q$. 
Consequently, since $f(\zeta) = 0$, we have $f(\sigma^i(\zeta)) = \sigma^i(f(\zeta)) = 0$ for all $i \ge 0$. This means that for $t=\mathrm{ord}_d(q)$, every element in the orbit \( O:=O(\zeta) = \{ \zeta, \sigma(\zeta), \dots, \sigma^{t-1}(\zeta) \} \) is a root of $f(x)$.

Summing the quantities $f(\sigma^i(\zeta))=0$ over $i=0,1,\ldots,t-1$ and expanding, we obtain
\[
0= \sum_{i=0}^{t-1}f(\sigma^i(\zeta))
=\sum_{j=0}^{n-1} a_j\Bigl(\sum_{i=0}^{t-1}(\zeta^{q^i})^j\Bigr) 
 = \sum_{j=0}^{n-1} a_j\Bigl(\sum_{\gamma \in O}\gamma^{j}\Bigr).
\]
Here, the last equality occurs because the orbit $O$ consists of exactly the elements $\zeta^{q^i}$ for $i=0, \dots, t-1$.
\hfill $\square$
\end{pf}

The orbit sum in (\ref{eq:thm1}) is in fact a field trace, as we note next.

\begin{remark}\label{rem:1}
Let $\mathbb F_{q^m}$ be an extension of $\mathbb F_q$. For $\alpha \in \mathbb F_{q^m}$, the \emph{trace} of $\alpha$ over $\mathbb F_q$ is defined by
\[
\mathrm{Tr}_{\mathbb F_{q^m}/\mathbb F_q}(\alpha)
=\sum_{i=0}^{m-1}\sigma^i(\alpha)
=\sum_{i=0}^{m-1}\alpha^{q^i}
=\alpha+\alpha^q+\cdots+\alpha^{q^{m-1}},
\]
where $\sigma(x)=x^q$. Taking $m=t$ and $\alpha=\zeta^j$ in the notation of Theorem~\ref{thm:fb} and its proof, we have, for every $j=0,1,\ldots,n-1$,
\begin{equation}
\label{eq:trace}
\mathrm{Tr}_{\mathbb F_{q^t}/\mathbb F_q}(\zeta^j)=\sum_{i=0}^{t-1}(\zeta^j)^{q^i}=\sum_{\gamma\in O}\gamma^{\,j},
\end{equation}
since $\zeta\in\mathbb F_{q^t}$ (and hence $\zeta^j\in\mathbb F_{q^t}$ for all $j$), and $O=\{\zeta,\zeta^q,\ldots,\zeta^{q^{t-1}}\}$. Thus, condition (\ref{eq:thm1}) of Theorem~\ref{thm:fb} can be written as
\[\sum_{j=0}^{n-1}\Bigl(\mathrm{Tr}_{\mathbb F_{q^t}/\mathbb F_q}(\zeta^j)\Bigr)a_j=0
\quad\text{in }\mathbb F_q. \]

Finally, this trace value does not depend on the chosen representative of $O$: if $\zeta'=\zeta^{q^k}\in O$ with $0\le k\le t-1$, then
\[
\mathrm{Tr}_{\mathbb F_{q^t}/\mathbb F_q}(\zeta'^j)=\sum_{i=0}^{t-1}(\zeta'^{\,j})^{q^i}
=\sum_{i=0}^{t-1}\bigl((\zeta^{\,j})^{q^k}\bigr)^{q^i}
=\sum_{i=0}^{t-1}(\zeta^{\,j})^{q^{i+k}}
=\sum_{l=0}^{t-1}(\zeta^{\,j})^{q^l}=\mathrm{Tr}_{\mathbb F_{q^t}/\mathbb F_q}(\zeta^j),
\]
where the indices $l = i+k$ are taken modulo $t$.
\end{remark}

Hence, we obtain the following corollary.

\begin{corollary}\label{cor:suff}
Under the assumptions of Theorem~\ref{thm:fb}, if for every divisor $d\mid n$ and every orbit $O$
of primitive $d$-th roots of unity, there exists an element $\zeta\in O$ such that
\begin{equation}
\label{eq:cor}
\sum_{j=0}^{n-1}\Bigl(\mathrm{Tr}_{\mathbb F_q(\zeta)/\mathbb F_q}(\zeta^j)\Bigr)a_j \neq 0
\quad\text{in }\mathbb F_q,
\end{equation}
then $C$ is invertible over $\mathbb F_q$.
\end{corollary}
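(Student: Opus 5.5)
The plan is to prove the corollary by contraposition from Theorem~\ref{thm:fb}, with Remark~\ref{rem:1} identifying orbit sums with field traces. Assume $C$ is singular over $\mathbb F_q$. Theorem~\ref{thm:fb} then gives a divisor $d\mid n$ and an orbit $O=O(\zeta_0)$ of a primitive $d$-th root of unity $\zeta_0$ such that
\[
\sum_{j=0}^{n-1}\Bigl(\sum_{\gamma\in O}\gamma^{\,j}\Bigr)a_j=0 \quad\text{in }\mathbb F_q .
\]
The aim is to show that the hypothesis of the corollary fails for this particular pair $(d,O)$. That is, for every $\zeta\in O$ the sum in (\ref{eq:cor}) vanishes, which contradicts the existence of a $\zeta\in O$ satisfying (\ref{eq:cor}).

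The first step is to fix an arbitrary $\zeta\in O$ and rewrite the coefficient $\mathrm{Tr}_{\mathbb F_q(\zeta)/\mathbb F_q}(\zeta^j)$ as an orbit sum. Since $\zeta$ is a primitive $d$-th root of unity, $[\mathbb F_q(\zeta):\mathbb F_q]=|O(\zeta)|=\mathrm{ord}_d(q)=t$, as computed before Theorem~\ref{thm:fb}. Hence $\mathbb F_q(\zeta)=\mathbb F_{q^t}$. Moreover $O(\zeta)=O(\zeta_0)=O$, because Frobenius orbits partition the set of primitive $d$-th roots of unity.

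Next we apply (\ref{eq:trace}) of Remark~\ref{rem:1} with $\zeta$ in place of the representative. This gives
\[
\mathrm{Tr}_{\mathbb F_q(\zeta)/\mathbb F_q}(\zeta^j)=\sum_{\gamma\in O}\gamma^{\,j}\quad\text{for all } j .
\]
The final part of Remark~\ref{rem:1} confirms that this value does not depend on which representative of $O$ is used. Substituting into the vanishing sum from Theorem~\ref{thm:fb} shows that the left side of (\ref{eq:cor}) is $0$ for every $\zeta\in O$. This contradicts the hypothesis, so $C$ must be invertible.

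No step here is a genuine obstacle. The only point needing care is matching the field in the corollary, $\mathbb F_q(\zeta)$, with the field $\mathbb F_{q^t}$ used in Remark~\ref{rem:1}. This rests on the equality $[\mathbb F_q(\zeta):\mathbb F_q]=\mathrm{ord}_d(q)$, which uses $\gcd(q,d)=1$ from the standing assumption $\gcd(\mathrm{char}(\mathbb F_q),n)=1$. It also rests on the fact that every element of $O$ is again a primitive $d$-th root of unity with the same orbit.
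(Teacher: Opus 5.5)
Your proposal is correct and follows the paper's route. The paper obtains the corollary directly from Theorem~\ref{thm:fb} and Remark~\ref{rem:1}: the trace $\mathrm{Tr}_{\mathbb F_q(\zeta)/\mathbb F_q}(\zeta^j)$ equals the orbit sum $\sum_{\gamma\in O}\gamma^{\,j}$ whichever representative $\zeta\in O$ is chosen, so contraposition finishes the argument; your check that $\mathbb F_q(\zeta)=\mathbb F_{q^t}$ with $t=\mathrm{ord}_d(q)$ just makes explicit a step the paper leaves implicit.
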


Corollary~\ref{cor:suff} applies in general, but the trace values $\mathrm{Tr}_{\mathbb F_q(\zeta)/\mathbb F_q}(\zeta^j)$ appearing in it are left implicit. We now show that when ${\rm ord}_d(q)=\phi(d)$, each such trace equals the reduction $\overline{R_d(j)}$ of the classical Ramanujan sum $R_d(j)$; since these values are explicitly known, the invertibility condition becomes an explicit linear condition on the entries $a_0, a_1, \dots, a_{n-1}$ of $C$.

\subsection{Ramanujan Sum Criteria for Special Cases}

Assume $q$ is a prime power with $\gcd(q,d)=1$ and let $\zeta$ be a primitive $d$-th root of unity in some extension field of
$\mathbb F_q$. Motivated by the classical Ramanujan sum $R_d(j)$, namely the sum of the $j$-th powers of the primitive $d$-th roots of unity, we consider
\[
S_d(j): = \sum_{\substack{1\le a\le d\\\gcd(a,d)=1}}\zeta^{aj}, \quad T_d(j) : = \sum_{a = 1}^d \zeta^{aj}. 
\]
It is easy to see that $T_d(j) = \begin{cases}d & \text{ if } d \mid j \\ 0 & \text{ otherwise.} \end{cases}$

\begin{theorem}\label{thm:sdj}
The sum $S_d(j)$ is equal to the reduction of the classical Ramanujan sum $R_d(j)$ modulo $\mathrm{char}(\mathbb F_q)$, i.e., for every $d \ge 1$ and integer $j$, $S_d(j) = \overline{R_d(j)}$.
\end{theorem}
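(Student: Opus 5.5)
The plan is to derive a Möbius-inversion formula for $S_d(j)$ inside $\overline{\mathbb F}_q$ using only the geometric sums $T_e(j)$. We then compare it with the identical integer formula for $R_d(j)$ and push the latter through the ring homomorphism $\mathbb Z\to\mathbb F_q$. Throughout, $\gcd(q,d)=1$, so $x^d-1$ is separable over $\mathbb F_q$ and has exactly $d$ distinct roots $\zeta^a$, $1\le a\le d$. For each divisor $e\mid d$, the element $\zeta^{d/e}$ is a primitive $e$-th root of unity.

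First, I partition $\{1,\dots,d\}$ according to $g=\gcd(a,d)$. Writing $a=g b$ with $1\le b\le d/g$ and $\gcd(b,d/g)=1$, we have $\zeta^{a}=(\zeta^{g})^{b}$, where $\zeta^g$ is a primitive $(d/g)$-th root of unity. This gives
\[
T_d(j)=\sum_{e\mid d} S_e(j),
\]
where $S_e(j)$ is computed with the primitive $e$-th root $\zeta^{d/e}$. I must check that $S_e(j)$ does not depend on which primitive $e$-th root is used. This holds because any other choice is $(\zeta^{d/e})^{c}$ with $\gcd(c,e)=1$, and multiplication by $c$ permutes the reduced residues mod $e$. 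Applying Möbius inversion to this identity in the additive group of $\overline{\mathbb F}_q$ then yields
\[
S_d(j)=\sum_{e\mid d}\mu(d/e)\,T_e(j)=\sum_{e\mid d,\ e\mid j}\mu(d/e)\,e\cdot 1_{\mathbb F_q},
\]
using the stated evaluation of $T_e(j)$.

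Second, I run the same argument over $\mathbb C$ with $e^{2\pi i/d}$ to obtain the classical formula $R_d(j)=\sum_{e\mid \gcd(d,j)}\mu(d/e)\,e$. The right-hand side is an integer, and its image under $\mathbb Z\to\mathbb F_q$ is exactly the expression found above for $S_d(j)$. Hence $S_d(j)=\overline{R_d(j)}$.

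The step needing the most care is the evaluation of $T_e(j)$ in characteristic $p$, since everything rests on it. If $e\nmid j$, then $\omega=\zeta^{(d/e)j}\ne 1$ satisfies $\omega^e=1$, and the geometric series gives $\sum_{a=1}^{e}\omega^a=\omega(\omega^e-1)/(\omega-1)=0$. If $e\mid j$, every term equals $1$, so the sum is $e\cdot 1$. This can vanish when $p\mid e$, but it is still the correct image of the integer $e$. The hypothesis $\gcd(q,d)=1$ is what guarantees that a primitive $d$-th root of unity exists and that the $\zeta^a$ are distinct; without it, the partition in the first step would fail.
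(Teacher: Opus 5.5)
Your proposal is correct and follows the paper's proof: you partition $\{1,\dots,d\}$ by $\gcd(a,d)$ to get $T_d(j)=\sum_{e\mid d}S_e(j)$, apply M\"obius inversion, evaluate $T_e(j)$, and match the result with the classical identity $R_d(j)=\sum_{e\mid\gcd(d,j)}\mu(d/e)\,e$, which the paper cites from Apostol rather than rederiving over $\mathbb{C}$. Your explicit check that $S_e(j)$ does not depend on the chosen primitive $e$-th root fills in a point the paper leaves implicit; one small correction is that, since $e\mid d$ and $\gcd(q,d)=1$, the value $e\cdot 1$ can never actually vanish, so your remark about $p\mid e$ is vacuous.
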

\begin{pf}
Observe that 
\[ \sum_{\substack{1 \le a \le d \\ \gcd(a, d) = m}} \zeta^{aj} = \sum_{\substack{1 \le b \le d/m \\ \gcd(b, d/m) = 1}} (\zeta^m)^{bj}\]
and \[ T_d(j) = \sum_{m \mid d}\sum_{\substack{1 \le a \le d \\ \gcd(a, d) = m}} \zeta^{aj} = \sum_{m \mid d} \sum_{\substack{1 \le b \le d/m \\ \gcd(b, d/m) = 1}} (\zeta^m)^{bj} = \sum_{r \mid d} \sum_{\substack{1 \le b \le r \\ \gcd(b, r) = 1}}(\zeta^{\frac{d}{r}})^{bj} = \sum_{r \mid d}S_r(j).\]

Applying M\"obius inversion, we obtain 
\[ S_d(j) = \sum_{t \mid d}  \mu\!\left(\frac{d}{t}\right) T_t(j)=  \sum_{t \mid \gcd(d,j)} \mu\!\left(\frac{d}{t}\right)\,t =\overline{R_d(j)}.\]

The last equality is obtained by reducing (modulo $\mathrm {char}(\mathbb F_q)$) the well-known identity, which expresses Ramanujan sum in terms of the M\"obius function (see \cite[Theorem~8.6]{apostol}).
\hfill $\square$
\end{pf}

\begin{corollary}\label{cor:ra}
Let $O$ be the Frobenius orbit of a primitive $d$-th root $\zeta$, namely
\[
O=\{\zeta,\zeta^{q},\zeta^{q^2},\ldots,\zeta^{q^{t-1}}\},
\qquad t={\mathrm{ord}}_d(q).
\] For positive integers $d$ and $q$ with $\gcd(d, q) = 1$, if ${\mathrm{ord}}_d(q) = \phi(d)$, then for every integer $j$ \[\mathrm{Tr}_{\mathbb F_q(\zeta)/\mathbb F_q}(\zeta^j)=\sum_{\gamma\in O}\gamma^{\,j} = \overline{R_d(j)}.\]
\end{corollary}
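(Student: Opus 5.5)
The plan is to show that, under the hypothesis $\mathrm{ord}_d(q)=\phi(d)$, the Frobenius orbit $O$ of $\zeta$ is exactly the set of all primitive $d$-th roots of unity in $\overline{\mathbb F}_q$. Once that is established, the orbit sum $\sum_{\gamma\in O}\gamma^{j}$ coincides with $S_d(j)$. Theorem~\ref{thm:sdj} then gives $S_d(j)=\overline{R_d(j)}$, and Remark~\ref{rem:1} identifies the orbit sum with the trace $\mathrm{Tr}_{\mathbb F_q(\zeta)/\mathbb F_q}(\zeta^j)$, since $\mathbb F_q(\zeta)=\mathbb F_{q^t}$ with $t=|O|=\mathrm{ord}_d(q)$.

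First I record that each $\zeta^{q^i}$ is again a primitive $d$-th root of unity: because $\gcd(q,d)=1$, we have $\gcd(q^i,d)=1$, so $\zeta^{q^i}$ has order exactly $d$. Hence $O$ is contained in the set $P_d=\{\zeta^a:1\le a\le d,\ \gcd(a,d)=1\}$ of primitive $d$-th roots of unity.

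Next I compare sizes. Since $\gcd(q,d)=1$, the polynomial $x^d-1$ is separable over $\mathbb F_q$, so $\zeta$ has exactly $d$ distinct powers $\zeta^1,\dots,\zeta^d$. It follows that $|P_d|=\phi(d)$ and that distinct $a$ modulo $d$ give distinct elements. On the other side, $|O|=\mathrm{ord}_d(q)$ by the discussion preceding Theorem~\ref{thm:fb}. This equals $\phi(d)$ by hypothesis, so $O=P_d$.

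Consequently
\[
\sum_{\gamma\in O}\gamma^{j}=\sum_{\substack{1\le a\le d\\ \gcd(a,d)=1}}\zeta^{aj}=S_d(j)=\overline{R_d(j)},
\]
where the last equality is Theorem~\ref{thm:sdj}. The remaining equality with the trace is \eqref{eq:trace}. The only point that needs any care is the distinctness of the powers $\zeta^a$, which is where $\gcd(q,d)=1$ is used a second time. I also treat the trivial case $d=1$ separately: there $O=\{1\}$, and both sides equal $1$.
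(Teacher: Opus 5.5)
Your proposal is correct and follows the paper's proof. The paper's key step is ``$q$ generates $(\mathbb Z/d\mathbb Z)^\times$, so $O=\{\zeta^a: 1\le a\le d,\ \gcd(a,d)=1\}$.'' This is your containment-plus-counting argument $O\subseteq P_d$, $|O|=\mathrm{ord}_d(q)=\phi(d)=|P_d|$, after which both proofs invoke Theorem~\ref{thm:sdj} and (\ref{eq:trace}); one small remark is that the distinctness of $\zeta^1,\dots,\zeta^d$ follows directly from $\zeta$ having order exactly $d$, so no separate separability argument is needed there.
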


\begin{pf}
For each integer $j$, the first equality is immediate from (\ref{eq:trace}). 
For the second equality, by using (\ref{eq:trace}) again,
\[
\sum_{\gamma\in O}\gamma^{\,j}
=\sum_{i=0}^{t-1}(\zeta^{\,j})^{q^i}.
\]
Since ${\mathrm{ord}}_d(q)=\phi(d)$, $q$ generates $(\mathbb Z/d\mathbb Z)^\times$ and the set
of primitive $d$-th roots of unity forms a single orbit. Equivalently,
\[
O=\{\zeta^{\,a} : 1\le a\le d,\ \gcd (a,d)=1\}.
\]
Therefore,
\[
\sum_{\gamma\in O}\gamma^{\,j}
=\sum_{\substack{1\le a\le d\\\gcd(a,d)=1}} (\zeta^{\,a})^{\,j}
= S_d(j)=\overline{R_d(j)}.
\]
\hfill $\square$
\end{pf}

By Corollary~\ref{cor:suff} and Corollary~\ref{cor:ra}, we obtain the following theorem.

\begin{theorem}\label{thm:ra}
Let $C=C(a_0,a_1,\ldots,a_{n-1})$ over $\mathbb F_q$ with $\gcd(\mathrm{char}(\mathbb F_q), n) = 1$. 
Assume that $\sum_{j = 0}^{n-1} a_j \ne 0$.
If, for every divisor $d\mid n$ with $d\ne 1$, it satisfies $\mathrm{ord}_d(q)=\phi(d)$ and 
\begin{equation}
\label{eq:thm3}
\sum_{j=0}^{n-1} \overline{R_d(j)}\,a_j \neq 0 \quad\text{in }\mathbb F_q,
\end{equation}
then $C$ is invertible.   
\end{theorem}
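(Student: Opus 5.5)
We will derive Theorem~\ref{thm:ra} by verifying the hypothesis of Corollary~\ref{cor:suff} one divisor at a time, using Corollary~\ref{cor:ra} to identify the trace coefficients with reduced Ramanujan sums. The hypotheses of Theorem~\ref{thm:fb} (hence of Corollary~\ref{cor:suff}) are in force, since $\gcd(\mathrm{char}(\mathbb F_q),n)=1$. So it suffices to show the following: for every divisor $d\mid n$ and every Frobenius orbit $O$ of primitive $d$-th roots of unity, some $\zeta\in O$ satisfies \eqref{eq:cor}. By Remark~\ref{rem:1}, the trace coefficient does not depend on the representative. We may therefore take any $\zeta\in O$ and must show $\sum_j \mathrm{Tr}_{\mathbb F_q(\zeta)/\mathbb F_q}(\zeta^j)\,a_j\neq 0$.

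We split into the cases $d=1$ and $d\neq 1$.

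\emph{Case $d=1$.} The only primitive $1$st root of unity is $\zeta=1$. Its orbit is $O=\{1\}$, and $\mathbb F_q(\zeta)=\mathbb F_q$, so the trace is the identity map. Hence $\mathrm{Tr}(\zeta^j)=1$ for all $j$, and the left side of \eqref{eq:cor} equals $\sum_{j=0}^{n-1}a_j$, which is nonzero by assumption. (Equivalently, $\overline{R_1(j)}=1$, so this case also fits the Ramanujan-sum form.)

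\emph{Case $d\neq 1$.} By hypothesis $\mathrm{ord}_d(q)=\phi(d)$, and $\gcd(d,q)=1$ because $d\mid n$. Then Corollary~\ref{cor:ra} gives $\mathrm{Tr}_{\mathbb F_q(\zeta)/\mathbb F_q}(\zeta^j)=\overline{R_d(j)}$ for every integer $j$. Moreover, the proof of that corollary shows that all primitive $d$-th roots form a single orbit, so there is exactly one orbit to check. The left side of \eqref{eq:cor} therefore equals $\sum_j \overline{R_d(j)}a_j$, which is nonzero by \eqref{eq:thm3}.

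Since both cases satisfy the hypothesis, Corollary~\ref{cor:suff} yields that $C$ is invertible.

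There is no real obstacle here. The only point needing care is the case $d=1$, which the hypotheses treat separately. Corollary~\ref{cor:ra} formally needs $\mathrm{ord}_1(q)=\phi(1)=1$, which is true but not covered by the paper's definition of $\mathrm{ord}$ (stated for $n\ge 2$). For this reason I would handle $d=1$ directly, as above.
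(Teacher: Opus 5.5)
Your proof is correct and follows the paper's own argument: the paper obtains Theorem~\ref{thm:ra} directly from Corollary~\ref{cor:suff} and Corollary~\ref{cor:ra}. The hypothesis $\sum_j a_j\neq 0$ handles the divisor $d=1$ (orbit $\{1\}$, every trace coefficient equal to $1$); your separate treatment of $d=1$ spells out what the paper leaves implicit.
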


Note that the linear constraints in (\ref{eq:thm3}) coincide with those in \cite{LMA} after
reduction modulo $\mathrm{char}(\mathbb F_q)$.
\begin{theorem}\label{thm:q}\cite[Corollary 3.4]{LMA}
Let $C=C(a_0,a_1,\ldots,a_{n-1})$ over $\mathbb{Q}$. Assume $\sum_{j = 0}^{n-1} a_j \ne 0$. If, for every divisor $d \mid n$ with $d\ne 1$,
\[
\sum_{j=0}^{n-1} R_d(j)\,a_j \neq 0,
\]
then $C$ is invertible.   
\end{theorem}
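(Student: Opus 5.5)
The plan is to argue by contraposition, mirroring the proof of Theorem~\ref{thm:fb} but over $\mathbb{Q}$, where the relevant Galois structure is automatically as good as possible. Suppose $C$ is singular over $\mathbb{Q}$. Since $\det C=\prod_{k=0}^{n-1} f(\omega^k)$ with $\omega=e^{2\pi i/n}$ and $f(x)=\sum_{j=0}^{n-1}a_jx^j\in\mathbb{Q}[x]$, some $f(\omega^k)=0$. Set $\zeta=\omega^k$; it is a primitive $d$-th root of unity with $d=n/\gcd(k,n)$. If $d=1$, then $\zeta=1$ and $f(1)=\sum_j a_j=0$, contradicting the hypothesis. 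So $d\ne 1$, and it remains to show that $\sum_j R_d(j)a_j=0$, which contradicts the assumed condition for this $d$.

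The key input replacing the hypothesis $\mathrm{ord}_d(q)=\phi(d)$ is the irreducibility of the cyclotomic polynomial $\Phi_d$ over $\mathbb{Q}$. Since $f(\zeta)=0$ and $\Phi_d$ is the minimal polynomial of $\zeta$ over $\mathbb{Q}$, we get $\Phi_d\mid f$ in $\mathbb{Q}[x]$. Hence $f(\zeta^a)=0$ for every $a$ with $1\le a\le d$ and $\gcd(a,d)=1$. Equivalently, for each automorphism $\tau_a:\zeta\mapsto\zeta^a$ of $\mathbb{Q}(\zeta)/\mathbb{Q}$, we have $f(\tau_a\zeta)=\tau_a f(\zeta)=0$, because the coefficients of $f$ are rational. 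In other words, the single ``orbit'' is the full set of primitive $d$-th roots of unity, which is exactly the analogue of the situation in Corollary~\ref{cor:ra}.

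Summing these vanishing values over all such $a$ and exchanging the order of summation gives
\[
0=\sum_{\substack{1\le a\le d\\ \gcd(a,d)=1}} f(\zeta^a)=\sum_{j=0}^{n-1}a_j\sum_{\substack{1\le a\le d\\ \gcd(a,d)=1}}\zeta^{aj}=\sum_{j=0}^{n-1}R_d(j)\,a_j .
\]
The last equality holds because the primitive $d$-th roots of unity in $\mathbb{C}$ are exactly $e^{2\pi i a/d}$ with $\gcd(a,d)=1$, and $\{\zeta^a\}$ runs over this same set. This is the required contradiction.

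The only nontrivial ingredient is the irreducibility of $\Phi_d$ over $\mathbb{Q}$, a classical theorem that I will cite rather than reprove. It is precisely the step that fails over $\mathbb{F}_q$, and it explains why Theorem~\ref{thm:ra} needs the extra hypothesis on $\mathrm{ord}_d(q)$. Everything else is the same bookkeeping as in the proofs of Theorem~\ref{thm:fb} and Corollary~\ref{cor:ra}.
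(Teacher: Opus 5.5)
Your argument is correct. It is essentially the paper's orbit-sum argument (Theorem~\ref{thm:fb} together with Corollary~\ref{cor:ra}) carried out over $\mathbb{Q}$: irreducibility of $\Phi_d$ makes the orbit of $\zeta$ the full set of primitive $d$-th roots of unity, playing exactly the role of the hypothesis $\mathrm{ord}_d(q)=\phi(d)$. The paper itself only cites \cite{LMA} for this statement, and its introduction names this same irreducibility as the ingredient that fails over finite fields.
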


\medskip
By \cite[Proposition~4.1.3]{ire}, primitive roots modulo $d$ exist if and only if
$d\in\{2,4,p^k,2p^k\}$ for some odd prime $p$ and $k\ge 1$.

\begin{theorem}\label{thm:primepower}
Let $p$ be an odd prime, let $q$ be a prime power with $\gcd(p, q) = 1$, and let $k \ge 1$.
\begin{enumerate}
\item[\textnormal{(i)}] Assume $\mathrm{ord}_{p^k}(q)=\phi(p^k)=p^{k-1}(p-1)$.
Let $L=\mathbb F_{q^{\phi(p^k)}}$ and $\zeta\in L$ be a primitive $p^k$-th root of unity. Then for every integer $j$,
\[
\mathrm{Tr}_{L/\mathbb F_q}(\zeta^{j})
=
\begin{cases}
p^{k-1}(p-1), & p^k\mid j,\\[2mm]
-\,p^{k-1}, & p^{k-1}\mid j\ \text{but}\ p^k\nmid j,\\[2mm]
0, & p^{k-1}\nmid j,
\end{cases}
\qquad\text{in }\mathbb F_q.
\]
\item[\textnormal{(ii)}] Assume $\mathrm{ord}_{2p^k}(q)=\phi(2p^k)=p^{k-1}(p-1)$.
Let $L=\mathbb F_{q^{\phi(2p^k)}}$ and $\zeta\in L$ be a primitive $2p^k$-th root of unity. Then for every integer $j$, 
\[
\mathrm{Tr}_{L/\mathbb F_q}(\zeta^{j})
=
\begin{cases}
p^{k-1}(p-1), & 2p^k\mid j,\\[2mm]
-\,p^{k-1}, & 2p^{k-1}\mid j \ \text{but} \ 2p^k\nmid j,\\[2mm]
p^{k-1}, & p^{k-1}\mid j,\ p^k\nmid j,\ \text{and } 2\nmid j, \\[2mm]
 -p^{k-1}(p-1), & p^k \mid j \ \text{ and } \ 2 \nmid j, \\[2mm]
0, & p^{k-1}\nmid j,
\end{cases}
\qquad\text{in }\mathbb F_q.
\]
\end{enumerate}
\end{theorem}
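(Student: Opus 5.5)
The plan is to reduce both parts to Corollary~\ref{cor:ra} and then evaluate the classical Ramanujan sum explicitly. In part (i), since $\mathrm{ord}_{p^k}(q)=\phi(p^k)$, the field $\mathbb F_q(\zeta)$ has degree $\phi(p^k)$ over $\mathbb F_q$, so $\mathbb F_q(\zeta)=L$. Corollary~\ref{cor:ra} then gives
\[
\mathrm{Tr}_{L/\mathbb F_q}(\zeta^j)=\overline{R_{p^k}(j)}.
\]
Part (ii) works the same way with $d=2p^k$, using $\phi(2p^k)=\phi(p^k)$. It therefore suffices to compute $R_d(j)$ as an integer and reduce it.

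For this I would use the M\"obius formula already quoted in the proof of Theorem~\ref{thm:sdj}:
\[
R_d(j)=\sum_{t\mid \gcd(d,j)}\mu(d/t)\,t .
\]
\textbf{Case $d=p^k$.} Only $t=p^k$ and $t=p^{k-1}$ have $\mu(d/t)\neq 0$, with $\mu(1)=1$ and $\mu(p)=-1$.
\begin{itemize}
\item If $p^k\mid j$, both divisors occur and $R=p^k-p^{k-1}$.
\item If $p^{k-1}\mid j$ but $p^k\nmid j$, only $t=p^{k-1}$ occurs and $R=-p^{k-1}$.
\item If $p^{k-1}\nmid j$, neither occurs and $R=0$.
\end{itemize}
This gives (i).

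\textbf{Case $d=2p^k$.} The contributing divisors are the $t$ with $d/t$ squarefree, namely $t\in\{2p^k,\ p^k,\ 2p^{k-1},\ p^{k-1}\}$, with M\"obius values $1,-1,-1,1$ respectively. I split according to the parity of $j$ and the $p$-adic valuation of $j$.
\begin{itemize}
\item If $j$ is even and $p^k\mid j$, all four divisors occur and $R=2p^k-p^k-2p^{k-1}+p^{k-1}=p^{k-1}(p-1)$.
\item If $j$ is even and exactly $p^{k-1}\mid j$, only $2p^{k-1}$ and $p^{k-1}$ occur, so $R=-2p^{k-1}+p^{k-1}=-p^{k-1}$.
\item If $j$ is odd and $p^k\mid j$, only $p^k$ and $p^{k-1}$ occur, so $R=-p^k+p^{k-1}=-p^{k-1}(p-1)$.
\item If $j$ is odd and exactly $p^{k-1}\mid j$, only $p^{k-1}$ occurs and $R=p^{k-1}$.
\item If $p^{k-1}\nmid j$, no divisor occurs and $R=0$.
\end{itemize}

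The only real obstacle is matching these cases to the statement's conditions in (ii). The phrases ``$2p^k\mid j$'' and ``$2p^{k-1}\mid j$ but $2p^k\nmid j$'' must be read correctly. For even $j$, $2p^k\nmid j$ together with $2p^{k-1}\mid j$ means exactly $p^{k-1}\mid j$. So the five listed cases coincide with the partition above.

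Finally, reduction modulo $\mathrm{char}(\mathbb F_q)$ is just the image under $\mathbb Z\to\mathbb F_q$, which yields the stated values in $\mathbb F_q$.

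As a sanity check in case (ii), I would also note the alternative route: $\zeta=-\eta$ with $\eta$ a primitive $p^k$-th root of unity (since $p$ is odd). Then $\mathrm{Tr}(\zeta^j)=(-1)^j\,\mathrm{Tr}(\eta^j)$, which reproduces the sign pattern from (i) directly.
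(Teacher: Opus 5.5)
Your proof is correct and follows the paper's route. You identify $\mathbb F_q(\zeta)=L$, apply Corollary~\ref{cor:ra} to get $\mathrm{Tr}_{L/\mathbb F_q}(\zeta^j)=\overline{R_d(j)}$, and then evaluate the classical Ramanujan sum. The only difference is in that last step: the paper cites the known values of $R_{p^k}(j)$ and obtains (ii) from multiplicativity, $R_{2p^k}(j)=R_2(j)R_{p^k}(j)$, which is exactly what your $\zeta=-\eta$ check reflects since $R_2(j)=(-1)^j$. You instead derive both cases directly from the M\"obius formula and also carry out the case matching for (ii) that the paper leaves implicit.
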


\begin{pf}
Note that if $\textrm{ord}_d(q) = \phi(d)$ with $d = p^k$ or $2p^k$ for some odd prime~$p$ and $k\ge 1$, then by Corollary~\ref{cor:ra}, we obtain explicit traces using the known values and properties of the Ramanujan sum (\cite{nicol62, lucht10, anderson53, hardy27}).

For (i), with an odd prime $p$ and $k \ge 1$, we have
\[
R_{p^k}(j)
=
\begin{cases}
\phi(p^k) = p^{k-1}(p-1), & p^k\mid j,\\[2mm]
-\,p^{k-1}, & p^{k-1}\mid j \ \text{but}\ p^k\nmid j,\\[2mm]
0, & p^{k-1}\nmid j ,
\end{cases}
\]
and (i) is immediate from Corollary~\ref{cor:ra}, which states that 
\[
\mathrm{Tr}_{L/\mathbb F_q}(\zeta^j)=\overline{R_{p^k}(j)}.
\]

For (ii), 
we use
\[
R_{2p^k}(j)=R_2(j)\,R_{p^k}(j),
\]
which is obtained from the fact that if $\gcd(r,s) = 1$, then 
\( R_{rs}(j) = R_r(j)R_s(j)\) for every $j$.
Thus
\[
\mathrm{Tr}_{L/\mathbb F_q}(\zeta^j)=\overline{R_{2p^k}(j)}
= \overline{R_2(j)\,R_{p^k}(j)},
\]
and the stated formula follows by combining the explicit expressions
for $R_2(j)$ and $R_{p^k}(j)$.
\hfill $\square$
\end{pf}

\medskip
Unlike the case $\mathbb{Q}$, over a finite field, to replace $\sum_{\gamma\in O}\gamma^{\,j}$ in (1) by $\overline{R_d(j)}$ in (4) for Theorem~\ref{thm:ra}, the condition $\mathrm{ord}_d(q)=\phi(d)$ is necessary. 
In particular, the following example shows that, when $\mathrm{ord}_d(q) \neq \phi(d)$, one may have  $\sum_{\gamma\in O}\gamma^j \neq \overline{R_d(j)}$ for some $j$. 

\begin{example}
If $d = 5$ and $q = 4$, the $\phi(5) = 4$ and $\mathrm{ord}_5(4)=2.$
Let $\zeta$ be a primitive $5$-th root of unity, so its orbit is
\[
O=\{\zeta,\zeta^4\}.
\]
For $j=1$,
\[
\sum_{\gamma\in O}\gamma
=
\zeta+\zeta^{4}. \]
Let $\eta = \zeta+\zeta^4$. Then
\[ \eta^2+\eta+1 = (\zeta^4+\zeta)^2 + (\zeta^4+\zeta) + 1 = \zeta^3+\zeta^2+\zeta^4+\zeta+1 = 0\   \text{in }\mathbb F_4.\]
But if $\eta = \overline{R_5(1)} = \overline{-1} = 1$, then $\eta^2+\eta+1 = 1$ in $\mathbb F_4$. 
\end{example}
\medskip

Another contrast with $\mathbb Q$ is that, even when the trace admits a Ramanujan-sum expression, its behavior over $\mathbb F_q$ may differ. Indeed, reduction modulo \(\mathrm{char}(\mathbb{F}_q)\) may annihilate the coefficients coming from the Ramanujan sum, so a nontrivial linear constraint over \(\mathbb{Q}\) may become trivial over \(\mathbb{F}_q\). If only part of them vanish, the condition is weakened, but may still provide useful information while being easier to satisfy.

\medskip
\begin{example}
Take $q=3$, $d=7$, and $n=7$.
Then $\mathrm{ord}_7(3)=\phi(7)=6$,
so we are in the single-orbit case.
Since
\[
R_7(j)\in\{6,-1\},
\]
reduction modulo $3$ yields
\[
\overline{R_7(j)}=
\begin{cases}
0 & \text{ if  } 7\mid j,\\
2 & \text{ if  } 7\nmid j.
\end{cases}
\]

For the identity matrix \(I_7\) over any field \(F\), invertibility is obvious. Also,
\[
\sum_{i=0}^6 a_i = 1,
\]
which is nonzero in \(F\).
If \(F=\mathbb{Q}\), then the linear constraint for \(d=7\) becomes
\[
\sum_{\substack{0\le j\le 6\\7\mid j}}6a_j-\sum_{\substack{0\le j\le 6\\7\nmid j}} a_j
=6a_0=6\neq 0.
\]
Thus, Theorem~\ref{thm:q} correctly detects the invertibility of \(I_7\).
On the other hand, if \(F=\mathbb{F}_3\), then
\[
\sum_{\substack{0\le j\le 6\\7\mid j}}6a_j-\sum_{\substack{0\le j\le 6\\7\nmid j}} a_j
=-\sum_{\substack{0\le j\le 6\\7\nmid j}} a_j=0
\quad\text{in }\mathbb F_3.
\]
Hence, this linear constraint test does not detect the invertibility of \(I_7\) over \(\mathbb{F}_3\).
\end{example}

 Nevertheless, Theorem~\ref{thm:ra} remains a powerful tool, since the corresponding linear constraints are obtained directly from the known values of the Ramanujan sums. In particular, invertibility can often be determined solely from simple statistics of the entries of the first row, without imposing any specific arrangement or pattern on those entries. The following example illustrates this point.

\begin{example}
\label{ex:inv}
Consider \(q=7\) and \(n=11^s\), where \(s\ge 2\). Let
\[
C=C(a_0,a_1,\dots,a_{11^s-1})
\]
be a circulant matrix over \(\mathbb F_7\) with \(a_j\in\{0,1\}\).

Since \(7\) is a primitive root modulo \(11^2\), it is a primitive root
modulo \(11^k\) for every \(k\ge 1\). This follows from a standard result
for primitive roots modulo odd prime powers \cite{cohen}. In other words,
$\mathrm{ord}_d(7)=\phi(d)$
 for every \(d\mid 11^s\) with \(d\ne 1\). 

By Theorem~\ref{thm:ra}, \(C\) is invertible provided that
\[
\sum_{j=0}^{11^s-1} a_j \neq 0
\quad\text{in }\mathbb F_7,
\]
and, for \(d=11^k\) with \(1\le k\le s\), 
\[
\sum_{j=0}^{11^s-1}\overline{R_{11^k}(j)}\,a_j
\ne 0
\quad\text{in }\mathbb F_7.
\]
By Theorem~\ref{thm:primepower}(i) (with $p=11$ and $q=7$), 
\[
\overline{R_{11^k}(j)}
=
\begin{cases}
11^{k-1}\cdot 10, & 11^k\mid j,\\[2mm]
-\,11^{k-1}, & 11^{k-1}\mid j\ \text{but}\ 11^k\nmid j,\\[2mm]
0, & 11^{k-1}\nmid j,
\end{cases}
\qquad\text{in }\mathbb F_7.
\]
Writing 
\[
M_k:=\sum_{\substack{0\le t\le 11^s-1\\ 11^k\mid t}} a_t
\qquad (0\le k\le s),
\]
we obtain 
\[
\sum_{t=0}^{11^s-1}\overline{R_{11^k}(t)}\,a_t
=
4^{\,k-1}\,(4M_k-M_{k-1})
\quad\text{in }\mathbb F_7,
\]
since $11\equiv 4 \pmod 7$.

Since $4^{\,k-1}\in \mathbb F_7^\times$, the constraint
\[
\sum_{j=0}^{11^s-1}\overline{R_{11^k}(j)}\,a_j \neq 0
\]
is equivalent to
$4M_k-M_{k-1}\neq 0
\quad\text{in }\mathbb F_7.$

Now fix \(a_0=1\). Define
\[
S:=|\{j: a_j=1\}|,
\]
and, for \(1\le k\le s-1\),
\[
T_k:=|\{\,m\in\{1,\dots,11^{s-k}-1\}: a_{11^k m}=1\,\}|.
\]
Then
\[
M_0=S,\qquad
M_k=1+T_k\quad (1\le k\le s-1),
\qquad
M_s=1.
\]

Suppose that
\[
S\not\equiv 0 \pmod 7,
\qquad
S\equiv 1+T_1\equiv \cdots \equiv 1+T_{s-1}\pmod 7,
\qquad
T_{s-1}\not\equiv 3\pmod 7.
\]
Then
\[
4M_k-M_{k-1}\neq 0
\quad\text{in }\mathbb F_7,
\quad \forall\,1\le k\le s.
\]
Indeed, for \(1\le k\le s-1\), if
\[
S\equiv 1+T_1\equiv \cdots \equiv 1+T_{s-1}\pmod 7,
\]
then 
\[
4M_k-M_{k-1}= 4S-S=3S\neq 0
\quad\text{in }\mathbb F_7,
\]
and for \(k=s\),
\[
4M_s-M_{s-1}
=
4-(1+T_{s-1})
=
3-T_{s-1}\neq 0
\quad\text{in }\mathbb F_7.
\]

Therefore the corresponding \(0,1\)-circulant matrix is invertible over
\(\mathbb F_7\).

Consequently, these sufficient congruence conditions depend only on the
aggregate quantities \(S,T_1,\dots,T_{s-1}\), and not on the specific
locations of the nonzero entries. Hence any \(0,1\)-circulant matrix with
\(a_0=1\) whose associated values \(S,T_1,\dots,T_{s-1}\) satisfy these
conditions is invertible over \(\mathbb F_7\).

This shows that, for \(n=11^s\) with \(s\ge 2\), the sufficient criterion
applies to a broad and flexible family of \(0,1\)-circulant matrices.
The number of such admissible patterns increases rapidly with \(s\).
\end{example}

This example highlights a clear advantage of the sufficient condition: it yields a pattern-independent invertibility criterion, rather than on its precise arrangement, and is therefore very easy to apply in practice. Although the preceding results provide such sufficient conditions for invertibility, the next section establishes a necessary and sufficient criterion.

\section{Complete Invertibility Criteria via Polynomial Methods}

\subsection{Singularity Criteria via Simple Coefficient Conditions}
\label{sec:singularity}

We begin with two general results over an arbitrary field, which will serve as the algebraic foundation for the criteria developed below, particularly in the finite field setting.

The following result is well known; hence we skip its proof. 

\begin{proposition}\label{prop:min}
Let $F$ be a field, $\overline{F}$ an algebraic closure of $F$, and let $\alpha \in \overline{F}$.
Let $g(x)$ be the minimal polynomial of $\alpha$ over $F$.
For any polynomial $f(x) \in F[x]$, if $f(\alpha) = 0$, then $g(x) \mid f(x)$ in $F[x]$.
\end{proposition}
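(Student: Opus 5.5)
The plan is to use the defining property of the minimal polynomial together with division with remainder in $F[x]$. Recall that the minimal polynomial $g(x)$ of $\alpha$ over $F$ is the monic polynomial of least degree in $F[x]$ having $\alpha$ as a root. Such a polynomial exists because $\alpha\in\overline{F}$ is algebraic over $F$, so the set of nonzero polynomials in $F[x]$ vanishing at $\alpha$ is nonempty and we may choose one of minimal degree. Normalizing it to be monic, we fix $g$ with $\deg g\ge 1$ and $g(\alpha)=0$.

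Given $f(x)\in F[x]$ with $f(\alpha)=0$, we apply the division algorithm in $F[x]$. This is valid because $F$ is a field and $g$ is monic, hence nonzero. It gives $q(x),r(x)\in F[x]$ with
\[
f(x)=q(x)g(x)+r(x),
\]
where either $r=0$ or $\deg r<\deg g$. Evaluating at $\alpha$ is a ring homomorphism $F[x]\to\overline{F}$, so
\[
r(\alpha)=f(\alpha)-q(\alpha)g(\alpha)=0-q(\alpha)\cdot 0=0.
\]

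If $r\neq 0$, we may divide $r$ by its leading coefficient, which is possible since $F$ is a field. This produces a monic polynomial in $F[x]$ of degree strictly less than $\deg g$ that vanishes at $\alpha$, contradicting the minimality of $\deg g$. Hence $r=0$, and so $f=qg$, that is, $g(x)\mid f(x)$ in $F[x]$.

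There is no real obstacle in this argument. The only point needing care is to use the definition of the minimal polynomial through minimal degree rather than through irreducibility. If one instead defines $g$ as the monic irreducible polynomial with root $\alpha$, the same conclusion follows by a different route. The ideal $I=\{h\in F[x]: h(\alpha)=0\}$ is a proper nonzero ideal of the principal ideal domain $F[x]$, so $I$ is generated by a monic polynomial $g_0$, and $g_0$ has minimal degree in $I$. Since $g\in I$, we have $g_0\mid g$, and irreducibility of $g$ forces $g=g_0$. Then $f\in I=(g)$ gives $g\mid f$.
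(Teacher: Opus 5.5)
Your proof is correct. The paper gives no proof of its own; it calls the result well known and skips it. Your division-with-remainder argument, with the ideal-theoretic variant for the irreducibility definition of the minimal polynomial, is the standard argument the paper is referring to.
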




The following lemma is known in various forms. We state it here in a form convenient for our purposes and include a short proof. In particular, it yields a purely algebraic proof of the rank identity over an arbitrary field, recovering the classical result of Ingleton~\cite{ingleton} in the case $F=\mathbb{Q}$.

\begin{lemma}\label{lem:rank_circulant_any_field}
Let $C = C(a_0,a_1,\dots,a_{n-1})$ over a field $F$, and
let
\[
f(x)=a_0+a_1x+\cdots+a_{n-1}x^{n-1}\in F[x]
\]
be its associated polynomial.
Then
\[
\rank_F C=n-\deg\gcd\bigl(f(x),x^n-1\bigr).
\]
In particular, $C$ is singular over $F$ if and only if $\gcd(f(x),x^n-1)\neq 1$ in $F[x]$. Throughout, we take the $\gcd$ to be monic.
\end{lemma}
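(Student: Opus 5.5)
The plan is to identify $C$ with a multiplication operator on the quotient ring $R:=F[x]/(x^n-1)$ and compute the rank of that operator. Take the basis $1,x,\dots,x^{n-1}$ of $R$ over $F$. Let $\mu_f:R\to R$ denote the $F$-linear map $g\mapsto fg$. Multiplication by $x$ permutes this basis cyclically, since $x\cdot x^{n-1}=x^n\equiv 1$. Hence $\mu_f$ sends $x^i$ to $\sum_j a_j x^{i+j}$ with exponents read modulo $n$. Its matrix in this basis is therefore the circulant $C(a_0,\dots,a_{n-1})$ or its transpose, depending on the row/column convention. Since transposition does not change rank, we get $\rank_F C=\dim_F \mathrm{im}(\mu_f)$.

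Next I compute the image. Let $h=\gcd(f,x^n-1)$, monic. I claim $\mathrm{im}(\mu_f)=hR$, the ideal generated by the class of $h$ in $R$. The inclusion $\subseteq$ holds because $h\mid f$. For $\supseteq$, Bézout in the PID $F[x]$ gives $u,v$ with $uf+v(x^n-1)=h$, so $h\equiv uf$ in $R$. Thus every multiple $hg$ equals $f\cdot(ug)\in \mathrm{im}(\mu_f)$.

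It remains to find $\dim_F(hR)$. The ideal $hR$ is the image of the ideal $(h)\subseteq F[x]$ in $R$. Because $h\mid x^n-1$, we have $(x^n-1)\subseteq (h)$, so $hR\cong (h)/(x^n-1)$. By the third isomorphism theorem, $R/hR\cong F[x]/(h)$, and this has dimension $\deg h$. Therefore $\dim hR=n-\deg h$, which proves the rank formula. The singularity statement follows at once: $C$ is singular iff $\rank C<n$, iff $\deg h\ge 1$, iff $h\ne 1$ (recall $h$ is taken monic).

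I expect the main obstacle to be bookkeeping rather than anything conceptual. One must check that the multiplication operator has matrix exactly $C$ (or $C^{T}$) under the paper's circulant convention, and that the dimension count also works when $f=0$. In that case $h=x^n-1$, and the formula gives rank $0$, as it should.
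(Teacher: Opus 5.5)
Your proof is correct and follows essentially the paper's route. Both realize $C$ as the matrix of multiplication by $f$ on $F[x]/\langle x^n-1\rangle$, up to transpose; the paper instead passes to the permutation-similar circulant $\widetilde C$. The only difference is that you compute the image directly as $hR$ via B\'ezout and $R/hR\cong F[x]/(h)$. The paper instead computes the kernel $\langle h_1\rangle$, where $h_1=(x^n-1)/\gcd(f,x^n-1)$, shows it has dimension $\deg\gcd(f,x^n-1)$, and applies rank--nullity.
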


\begin{pf}
It is convenient to work with the circulant matrix $\widetilde C$ whose first row is
$(a_0,a_{n-1},a_{n-2},\ldots,a_1)$.
It is immediate that $C$ and $\widetilde C$ are permutation-similar (hence have the same rank and
the same singularity behavior).

Consider the quotient ring $R=F[x]/\langle x^n-1 \rangle$, and the $F$-linear map
\[
T_f : R \longrightarrow R,\qquad g(x)\longmapsto f(x)g(x)\bmod (x^n-1).
\]
With respect to the basis $\{1,x,x^2,\ldots,x^{n-1}\}$ of $R$, the matrix of $T_f$ is precisely
$\widetilde C$.

Let $u(x)=\gcd\bigl(f(x),x^n-1\bigr)$ in $F[x]$, and write $f(x)= u(x) f_1(x)$ and $x^n-1=u(x) h_1(x)$ with
$\gcd(f_1(x),h_1(x))=1$.
Then for $g\in R$ we have
\[
T_f(g)=0 \iff (x^n-1)\mid f(x) g(x) \iff u(x) h_1(x) \mid u(x) f_1(x) g(x)  \iff h_1(x) \mid g(x). 
\]
Hence, $\ker(T_f)=\langle h_1 \rangle$, the ideal generated by $h_1$ in $R$.

Each element in $\langle h_1 \rangle$ is $h_1g$ for some $g(x) \in F[x]$, where $g(x) = u(x)g_1(x) + v(x)$ with either $v = 0$ or $ \deg v(x)<\deg u(x)$. Therefore, $h_1(x)g(x) = u(x)h_1(x)g_1(x)+h_1(x)v(x) = (x^n-1)g_1(x)+h_1(x)v(x) = h_1(x)v(x) \in R$. Also, for any nonzero polynomial $v(x)$ with $ \deg v(x) < \deg u(x)$, if $h_1(x)v(x) = 0$, then $(x^n-1) \mid h_1(x)v(x)$ and so $u(x) \mid v(x)$ which is a contradiction. Hence, every nonzero element of $\langle h_1(x)\rangle$ has a unique representative of the form $h_1(x)v(x)$ with $\deg v(x)<\deg u(x)$, so $\nullity(T_f) = \dim_F\ker(T_f)=\deg u(x)$.
Therefore,
\[
\rank_F \widetilde C=\rank_F(T_f)=n-\deg u(x)
= n-\deg\gcd\bigl(f(x),x^n-1\bigr).
\]
Since $\rank_F C=\rank_F \widetilde C$, the claim follows.
\hfill $\square$
\end{pf}

Let $J$ be the $n \times n$ matrix with all entries $1$.

\begin{corollary}\label{cor:rank}
     Let $C = C(a_0,a_1,\dots,a_{n-1})$ be a circulant $0,1$-matrix over a field $F$. Let $f(x)$ be the associated polynomial of $C$ and let $g(x)$ be the associated polynomial of $ J - C$. If $x - 1$ divides neither $f(x)$ nor $g(x)$, then  
\[ \rank_F(C) = \rank_F(J-C). \]

\end{corollary}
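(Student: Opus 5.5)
The plan is to compare the two gcds $\gcd(f(x),x^n-1)$ and $\gcd(g(x),x^n-1)$ and then apply Lemma~\ref{lem:rank_circulant_any_field} to both $C$ and $J-C$. Since $J=C(1,1,\ldots,1)$ and $C$ is a $0,1$-matrix, $J-C$ is again circulant, with first row $(1-a_0,\ldots,1-a_{n-1})$. Hence its associated polynomial is
\[
g(x)=\sum_{j=0}^{n-1}(1-a_j)x^j = s(x)-f(x),\qquad s(x):=1+x+\cdots+x^{n-1}.
\]
The key identity is $x^n-1=(x-1)s(x)$. It gives $s(\alpha)=0$ for every $n$-th root of unity $\alpha\ne 1$, but it does not by itself control multiplicities, since $x^n-1$ may have repeated roots when $\mathrm{char}(F)\mid n$. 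I will therefore work with divisibility in $F[x]$ rather than with roots.

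First I show $\gcd(f,x^n-1)=\gcd(f,s)$. Since $s\mid x^n-1$, the right side divides the left. Conversely, let $u=\gcd(f,x^n-1)$. Because $x-1\nmid f$ and $x-1$ is irreducible, $\gcd(u,x-1)=1$. From $u\mid (x-1)s$ we then get $u\mid s$, so $u\mid\gcd(f,s)$. The same argument, using $x-1\nmid g$, gives $\gcd(g,x^n-1)=\gcd(g,s)$.

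Next, because $g=s-f$, the ideal $\langle f,s\rangle$ equals $\langle g,s\rangle$ in $F[x]$. Hence the monic gcds agree: $\gcd(f,s)=\gcd(g,s)$. Combining this with the previous step,
\[
\deg\gcd(f,x^n-1)=\deg\gcd(g,x^n-1).
\]
Lemma~\ref{lem:rank_circulant_any_field} then gives $\rank_F(C)=n-\deg\gcd(f,x^n-1)=n-\deg\gcd(g,x^n-1)=\rank_F(J-C)$.

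The only delicate point is the possible repeated factor $x-1$ in $x^n-1$ when $\mathrm{char}(F)$ divides $n$, where $(x-1)^2$ may divide $x^n-1$. The hypothesis that $x-1$ divides neither $f$ nor $g$ is exactly what makes the reduction from $x^n-1$ to $s$ valid. The coprimality argument with the irreducible $x-1$ handles this without any root-counting.
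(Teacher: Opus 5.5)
Your proof is correct and follows the paper's argument. It reduces $\gcd(\cdot,x^n-1)$ to $\gcd(\cdot,1+x+\cdots+x^{n-1})$ using $x-1\nmid f$ and $x-1\nmid g$, equates the two gcds via $f+g=1+x+\cdots+x^{n-1}$, and concludes with Lemma~\ref{lem:rank_circulant_any_field}. Your coprimality argument with the irreducible $x-1$ also justifies the reduction step when $\mathrm{char}(F)\mid n$, which the paper only asserts.
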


\begin{pf}
Note that $\gcd(f(x), x^n-1) = \gcd(f(x), x^{n-1}+x^{n-2}+\cdots + x +1)$ and $\gcd(g(x), x^n-1) = \gcd(g(x), x^{n-1}+x^{n-2}+\cdots + x +1)$ since $(x - 1) \nmid f(x)$ and $(x-1) \nmid g(x)$. Hence, we obtain
\[ \gcd(f(x), x^n - 1) = \gcd(f(x), f(x)+g(x)) = \gcd(g(x), g(x)+f(x)) = \gcd(g(x), x^n-1). \]
Hence, $\rank_F(C) = \rank_F(J-C)$ by Lemma~\ref{lem:rank_circulant_any_field}.
\hfill $\square$
\end{pf}

\begin{corollary}
Let $C = C(a_0,a_1,\dots,a_{n-1})$ be a circulant $0,1$-matrix over a field $F$. Let $f(x)$ be the associated polynomial of $C$ and let $g(x)$ be the associated polynomial of $ J - C$.  Assume that either $\mathrm{char}(F) = 0$ or $\mathrm{char}(F) = p > 0$ and $p \mid n$. Then $C$ is invertible if and only if $J-C$ is invertible.  
\end{corollary}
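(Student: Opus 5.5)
The plan is to reduce both invertibility statements to the common-root form of Lemma~\ref{lem:rank_circulant_any_field}: $C$ is singular over $F$ if and only if $f(x)$ and $x^n-1$ share a root in $\overline F$, because a nontrivial monic $\gcd$ in $F[x]$ has a root in $\overline F$. The same holds for $J-C$ with $g(x)$. Since $C$ is a $0,1$-matrix, the two associated polynomials satisfy
\[
f(x)+g(x)=1+x+\cdots+x^{n-1}=:\Phi(x),\qquad (x-1)\Phi(x)=x^n-1 .
\]
So the whole argument amounts to comparing $f(\zeta)$ and $g(\zeta)$ at the $n$-th roots of unity $\zeta\in\overline F$. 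The statement is symmetric under $C\leftrightarrow J-C$, since $J-(J-C)=C$ and $J-C$ is again a $0,1$-circulant. It therefore suffices to show: if $C$ is invertible, then $J-C$ is invertible.

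For a root $\zeta\neq 1$ of $x^n-1$, the factorization gives $\Phi(\zeta)=0$ in any characteristic, hence $g(\zeta)=-f(\zeta)$. At such roots the vanishing of $g$ and of $f$ are equivalent. The only delicate point is the root $\zeta=1$, where $f(1)+g(1)=\Phi(1)=n\cdot 1_F$. I split into the two cases allowed by the hypothesis.

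If $\mathrm{char}(F)=p>0$ with $p\mid n$, then $\Phi(1)=n\cdot 1_F=0$. Thus $g(\zeta)=-f(\zeta)$ holds at every root of $x^n-1$, including $\zeta=1$. Hence $f$ and $g$ have exactly the same roots among the $n$-th roots of unity, and the equivalence follows at once.

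If $\mathrm{char}(F)=0$, suppose $C$ is invertible, so $f(\zeta)\ne0$ for all $n$-th roots of unity $\zeta$. By the observation above, $g(\zeta)\neq 0$ for all $\zeta\ne 1$. It remains to check $g(1)\neq0$. Now $g(1)$ is the number of zero entries in the first row of $C$, viewed in $F\supseteq\mathbb Q$. It vanishes only if every $a_j=1$, that is, $C=J$. But $J$ is singular for $n\ge2$ (rank $1$, or directly $f(\zeta)=\Phi(\zeta)=0$ for $\zeta\neq1$). This contradicts the invertibility of $C$, so $g(1)\ne0$ and $J-C$ is invertible.

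The main obstacle is the root $\zeta=1$, where the identity $g=-f$ on roots of unity can fail. In characteristic $p\mid n$ it is handled automatically by $\Phi(1)=0$. In characteristic $0$ it is handled by the counting argument that uses the $0,1$ structure, which is exactly where characteristic $0$ is needed: in characteristic $p\nmid n$ the number of zero entries could be $\equiv0\pmod p$ without being $0$.
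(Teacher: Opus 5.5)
Your proof is correct and is essentially the paper's argument: both rest on $f+g=1+x+\cdots+x^{n-1}$, which makes $f$ and $g$ vanish together at every $n$-th root of unity other than $1$, and both treat $x=1$ separately, by the $0,1$ count in characteristic $0$ and by $n\cdot 1_F=0$ when $p\mid n$. The difference is only in phrasing: the paper argues through the gcd identity of Corollary~\ref{cor:rank}, which also gives $\rank_F C=\rank_F(J-C)$ whenever $(x-1)\nmid f(x)g(x)$, while you work with roots in $\overline F$, which lets you treat the case $p\mid n$ without the paper's split on whether $p\mid f(1)$. Like the paper, you need $n\ge 2$ in characteristic $0$: for $n=1$, $C=J=[1]$ is invertible while $J-C=0$ is not. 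You are right to state this assumption explicitly, since the paper uses it only implicitly.
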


\begin{pf}
Suppose that $\mathrm{char}(F) = 0$. If $f(1) = 0$ or $f(1) = n$, then $C$ is singular since it is either $C$ is the zero matrix or $C = J$. Hence, $J - C$ is also singular in this case. Otherwise, $1\le f(1) \le n-1$, and thus $1\le g(1) \le n-1$ since $g(1) = n-f(1)$. Hence, $(x - 1) \nmid f(x)g(x)$ in this case, and by Corollary~\ref{cor:rank}, $C$ is invertible if and only if $J-C$ is invertible.

Suppose that $\mathrm{char}(F) = p > 0$ and $p \mid n$. If $p \mid f(1)$, then $(x - 1) \mid f(x)$ in $F[x]$. Hence $\gcd(f(x), x^n-1) \neq 1$ and $C$ is singular. In this case, $(x -1) \mid g(x)$ since $p \mid g(1)$. Hence $J-C$ is singular.  If $p \nmid f(1)$, then $(x - 1) \nmid f(x)$ and $p \nmid g(1)$. Hence, $(x - 1)\nmid g(x)$. By Corollary~\ref{cor:rank}, $C$ is invertible if and only if $J-C$ is invertible. 
\hfill $\square$
\end{pf}

\begin{corollary}
Let $C = C(a_0,a_1,\dots,a_{n-1})$ be a circulant $0,1$-matrix over a field $F$. Let
\[
m=\left|\{\, i : a_i=1,\ 0\le i\le n-1 \,\}\right|.
\]
Assume that 
\(\mathrm{char}(F)=p>0\) and \(p\nmid m(n-m)\). Then $C$ is invertible if and only if $J-C$ is invertible.
\end{corollary}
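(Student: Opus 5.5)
The plan is to reduce everything to Corollary~\ref{cor:rank}. The hypothesis $p\nmid m(n-m)$ should guarantee that $x-1$ divides neither $f(x)$ nor $g(x)$ in $F[x]$. Once that is established, the two circulant matrices have the same rank, and hence one is invertible exactly when the other is.

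First, I will compute $f(1)$ and $g(1)$ in $F$. Since $C$ is a $0,1$-matrix with exactly $m$ entries equal to $1$ in its first row, we have $f(1)=m\cdot 1_F$. The first row of $J-C$ is $(1-a_0,\ldots,1-a_{n-1})$, so $g(x)=\sum_{j=0}^{n-1}x^j-f(x)$ and therefore $g(1)=(n-m)\cdot 1_F$.

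Next, I will use the assumption $p\nmid m(n-m)$. Because $p$ is prime, it gives $p\nmid m$ and $p\nmid n-m$. Since $\mathrm{char}(F)=p$, this means $f(1)\neq 0$ and $g(1)\neq 0$ in $F$. By the factor theorem, $x-1$ divides neither $f(x)$ nor $g(x)$ in $F[x]$.

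Then Corollary~\ref{cor:rank} gives $\rank_F(C)=\rank_F(J-C)$. In particular, $C$ has full rank $n$ if and only if $J-C$ does, which is the claimed equivalence.

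There is no real obstacle. The only point needing care is that $f(1)$ and $g(1)$ must be read as the images of the integers $m$ and $n-m$ in $F$. It is precisely $p\nmid m$ and $p\nmid n-m$ that keep these images nonzero, so primality of $p$ is used when splitting $p\nmid m(n-m)$ into these two conditions.
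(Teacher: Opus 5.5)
Your proof is correct and is essentially the paper's argument: $f(1)=m\cdot 1_F$ and $g(1)=(n-m)\cdot 1_F$ are nonzero in $F$, so $x-1$ divides neither $f(x)$ nor $g(x)$, and Corollary~\ref{cor:rank} gives $\rank_F(C)=\rank_F(J-C)$. One small correction to your closing remark: deducing $p\nmid m$ and $p\nmid n-m$ from $p\nmid m(n-m)$ does not use the primality of $p$ (if $p\mid m$ then $p\mid m(n-m)$ for any integer $p$); primality would only be needed for the converse implication.
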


\begin{pf}
Let $f(x)$ and $g(x)$ be the associated polynomials of $C$ and $J-C$, respectively. Then $m = f(1) \neq 0$ and $n - m = g(1) \neq 0$, which imply that $(x - 1) \nmid f(x)g(x)$. By Corollary~\ref{cor:rank}, $C$ is invertible if and only if $J-C$ is invertible. 
\hfill $\square$
\end{pf}

\begin{proposition}\label{pro:ex}
Assume $q=p^k>2$ and $n\ge t(q-1)$ for some integer $t\ge 0$.
Suppose that among $a_0,\dots,a_{n-1}\in\mathbb{F}_q$, each element of $\mathbb{F}_q^\times$ appears exactly $t$ times
(in an arbitrary order), and the remaining $n-t(q-1)$ coefficients are $0$.
Then $C(a_0, a_1, \cdots, a_{n-1})$ is singular over $\mathbb{F}_q$.
\end{proposition}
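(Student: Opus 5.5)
The natural approach is to exhibit a common root of $f(x)$ and $x^n-1$, namely $x=1$, and then invoke Lemma~\ref{lem:rank_circulant_any_field}, or equivalently the determinant formula (\ref{eq:det}) with $j=0$. With the associated polynomial $f(x)=\sum_{j=0}^{n-1}a_jx^j$, we have
\[
f(1)=\sum_{j=0}^{n-1}a_j = t\sum_{c\in\mathbb F_q^\times} c,
\]
because every element of $\mathbb F_q^\times$ occurs exactly $t$ times and the remaining entries are $0$. The arrangement of the entries plays no role here.

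The key step is the standard fact that $\sum_{c\in\mathbb F_q^\times}c=0$ whenever $q>2$. To prove it, I would take a generator $g$ of the cyclic group $\mathbb F_q^\times$; since $q>2$ we have $g\neq 1$. Then
\[
\sum_{c\in\mathbb F_q^\times} c=\sum_{i=0}^{q-2}g^i=\frac{g^{q-1}-1}{g-1}=0 .
\]
An alternative argument is that $\sum_{c\in\mathbb F_q}c$ is the negative of the $x^{q-1}$ coefficient of $x^q-x$, which is $0$ when $q>2$. This is the only point where the hypothesis $q>2$ enters. For $q=2$ the sum equals $1$, which shows why that case must be excluded. The case $t=0$ is trivial: then $C$ is the zero matrix, and singularity is immediate because $n\ge 2$.

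It follows that $f(1)=0$, so $(x-1)\mid f(x)$. Since also $(x-1)\mid x^n-1$, we get $\gcd(f(x),x^n-1)\neq 1$, and Lemma~\ref{lem:rank_circulant_any_field} shows that $C$ is singular over $\mathbb F_q$. Note that this argument does not require $\gcd(p,n)=1$, so it is better to cite the lemma rather than (\ref{eq:det}). No step presents a real obstacle; the only thing to handle with care is the vanishing of the sum of the nonzero field elements.
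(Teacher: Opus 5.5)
Your proof is correct and follows the paper's argument: both use Lemma~\ref{lem:rank_circulant_any_field} to reduce singularity to $f(1)=\sum_j a_j = t\sum_{c\in\mathbb{F}_q^\times}c=0$. The only difference is cosmetic: the paper shows $\sum_{c\in\mathbb{F}_q^\times}c=0$ by the scaling identity $S=bS$ with $b\neq 1$, whereas you use a generator of the cyclic group and the geometric series.
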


\begin{pf}
By Lemma~\ref{lem:rank_circulant_any_field}, it suffices to show that $\sum_{j=0}^{n-1} a_j=0$ in $\mathbb{F}_q$.
By the assumption,
\[
\sum_{j=0}^{n-1} a_j \;=\; t\sum_{a\in\mathbb{F}_q^\times} a.
\]
Let $S=\sum_{a\in\mathbb{F}_q^\times} a$. For any $b\in\mathbb{F}_q^\times$, multiplication by $b$ permutes
$\mathbb{F}_q^\times$, hence
\[
S=\sum_{a\in\mathbb{F}_q^\times} a=\sum_{a\in\mathbb{F}_q^\times} ba=bS.
\]
Choose $b\in\mathbb{F}_q^\times\setminus\{1\}$, which exists since $q>2$. Then $(b-1)S=0$, and since $b-1\neq 0$
we conclude that $S=0$. Therefore $\sum_{j=0}^{n-1} a_j=tS=0$, as claimed.
\hfill $\square$
\end{pf}

\begin{theorem}\label{thm:n=ps}
Let $p$ be a prime, $q=p^k$, and $\mathbb{F}_q$ be a finite field.  Let $n = p^s, \ s\ge 1$. 
Let $C = C(a_0,a_1,\dots,a_{n-1})$ over a field $\mathbb{F}_q$.
Then the following are equivalent:
\begin{enumerate}
\item[\textnormal{(i)}] $C$ is singular over $\mathbb{F}_q$;
\item[\textnormal{(ii)}] $\sum_{j = 0}^{n-1}a_j = 0$ in $\mathbb{F}_q$.
\end{enumerate}
\end{theorem}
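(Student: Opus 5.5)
The plan is to apply Lemma~\ref{lem:rank_circulant_any_field}. It says that $C$ is singular over $\mathbb{F}_q$ if and only if $\gcd(f(x),x^n-1)\neq 1$ in $\mathbb{F}_q[x]$, where $f(x)=\sum_{j=0}^{n-1}a_jx^j$. So the task reduces to identifying the monic irreducible factors of $x^n-1$ when $n=p^s$ and $\mathrm{char}(\mathbb{F}_q)=p$.

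The key step is the Frobenius identity in characteristic $p$: since $(u+v)^p=u^p+v^p$ in $\mathbb{F}_q[x]$, iterating $s$ times gives
\[
x^{p^s}-1=(x-1)^{p^s}\quad\text{in }\mathbb{F}_q[x].
\]
Consequently, the only monic irreducible divisor of $x^n-1$ is $x-1$. Any nontrivial common divisor of $f(x)$ and $x^n-1$ must therefore be divisible by $x-1$. This gives
\[
\gcd(f(x),x^n-1)\neq 1 \iff (x-1)\mid f(x).
\]

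By the factor theorem, $(x-1)\mid f(x)$ if and only if $f(1)=0$, that is, $\sum_{j=0}^{n-1}a_j=0$ in $\mathbb{F}_q$.

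For (i)$\Rightarrow$(ii): singularity gives a nontrivial monic gcd, whose irreducible factors divide $(x-1)^{p^s}$ and hence equal $x-1$. So $x-1$ divides $f(x)$, and $f(1)=0$.

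For (ii)$\Rightarrow$(i): $f(1)=0$ gives $(x-1)\mid f(x)$. Since $x-1$ also divides $x^n-1$, the gcd is nontrivial and $C$ is singular by Lemma~\ref{lem:rank_circulant_any_field}.

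None of these steps is a real obstacle. The only point needing care is justifying $x^{p^s}-1=(x-1)^{p^s}$, which I would prove by induction on $s$ using the freshman's dream together with $(-1)^{p}=-1$ for odd $p$. For $p=2$ we have $-1=1$, so the identity holds there as well. I would also note that Theorem~\ref{thm:fb} and the trace machinery cannot be used here, because they assume $\gcd(p,n)=1$. That is why the argument goes through the gcd criterion instead.
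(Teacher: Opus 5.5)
Your proposal is correct and follows the paper's proof step for step: the gcd criterion of Lemma~\ref{lem:rank_circulant_any_field}, the identity $x^{p^s}-1=(x-1)^{p^s}$ in characteristic $p$ (so $x-1$ is the only irreducible factor), and the factor theorem $f(1)=\sum_j a_j$. Your explicit inductive justification of that identity, including the $p=2$ case, fills in a detail the paper leaves implicit.
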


\begin{pf}
Let
\(
f(x)=a_0+a_1x+\cdots+a_{n-1}x^{n-1}
\)
be the associated polynomial of $C$.
Then, by Lemma \ref{lem:rank_circulant_any_field}, $C$ is singular over $\mathbb{F}_q$ if and only if 
\[
\gcd\bigl(f(x),x^n-1\bigr)\ne 1
\quad\text{in } \mathbb{F}_q[x].
\]
Since $\mathrm{char}(\mathbb{F}_q)=p$ and $n=p^s$, we have
\[
x^n-1=x^{p^s}-1=(x-1)^{p^s}\quad\text{in }\mathbb{F}_q[x].
\]
Hence, the only distinct irreducible factor of $x^n-1$ in $\mathbb{F}_q[x]$ is $(x-1)$, and therefore
\[
\gcd\bigl(f(x),x^n-1\bigr)\ne 1
\quad\Longleftrightarrow\quad
(x-1)\mid f(x) \Longleftrightarrow\quad \sum_{j = 0}^{n-1}a_j = 0 \] in $\mathbb{F}_q$.
\hfill $\square$
\end{pf}

\begin{example}
Let $p$ be a prime. Consider an $n \times n$ circulant $0,1$-matrix $C$ with $n = p^s$ and $m$ ones in its first row. Such matrices were studied over $\mathbb{Q}$ in \cite{LMA} and shown to be nonsingular if either $1 \le m \le p-1$ or $1 \le p^s-m \le p-1$.

We apply Theorem \ref{thm:n=ps} to determine the singularity of the above matrices over finite fields, rather than over $\mathbb{Q}$. Specifically, by viewing $C$ over $\mathbb{F}_p$, we observe that the condition
\[
\sum_{j = 0}^{n-1}a_j = m \ne 0 \quad\hbox{in $\mathbb{F}_p$}
\]
is guaranteed by the aforementioned range of $m$. Therefore, Theorem \ref{thm:n=ps} directly implies that $C$ is nonsingular over $\mathbb{F}_p$.
\end{example}

\subsection{The Coprime Case 
$\gcd(n,\mathrm{char}(F))=1$}
\label{sec:singularity_gcd}

Let $F=\mathbb{F}_q$ be a finite field, and let $n\ge 1$ satisfy $\gcd(n,\mathrm{char}(\mathbb{F}_q))=1$.
The $n$-th cyclotomic polynomial $\Phi_n(x)\in \mathbb{F}_q[x]$ is the unique monic polynomial whose roots in an algebraic
closure $\overline{\mathbb{F}}_q$ are exactly the primitive $n$-th roots of unity. Equivalently, if $\zeta\in\overline{\mathbb{F}}_q$
is a primitive $n$-th root of unity, then
\[
\Phi_n(x)=\prod_{\substack{1\le s\le n\\ \gcd(n,s)=1}}\bigl(x-\zeta^{\,s}\bigr).
\]
Moreover, one has the factorization
\[
x^n-1=\prod_{d\mid n}\Phi_d(x)
\qquad\text{in }\mathbb{F}_q[x].
\]

\begin{remark}\label{rem:cyclotomic_reduction}
Recall that $\Phi_n(X)$, the classical $n$-th cyclotomic polynomial, belongs to $\mathbb{Z}[X]$. 
When $\gcd(n,\mathrm{char}(\mathbb{F}_q))=1$, the polynomial $\Phi_n(x)$ defined above agrees with the reduction of $\Phi_n(X)$
modulo $\mathrm{char}(\mathbb{F}_q)$ (and hence may be viewed in $\mathbb{F}_q[x]$); see, e.g.,
\cite[Section~1.1]{sanna}.
\end{remark}

\begin{proposition}\label{pro:cyc_field}
Let $C=C(a_0,a_1,\ldots,a_{n-1})$ over $\mathbb{F}_q$ with $\gcd(\mathrm{char}(\mathbb{F}_q), n) = 1$. 
Let 
\(
f(x)=a_0+a_1x+\cdots+a_{n-1}x^{n-1}\in \mathbb{F}_q[x]
\)
be the associated polynomial of $C$. 
Then, $C$ is singular iff
there exists a divisor $d\mid n$ such that
      some irreducible factor over $\mathbb{F}_q[x]$ of the cyclotomic polynomial $\Phi_d(x)$ divides $f(x)$.

\end{proposition}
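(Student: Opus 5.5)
The plan is to combine Lemma~\ref{lem:rank_circulant_any_field} with the factorization $x^n-1=\prod_{d\mid n}\Phi_d(x)$ in $\mathbb{F}_q[x]$ and with unique factorization in $\mathbb{F}_q[x]$. By Lemma~\ref{lem:rank_circulant_any_field}, $C$ is singular if and only if $u(x):=\gcd(f(x),x^n-1)\neq 1$. So it suffices to show that $u\neq 1$ if and only if some irreducible factor of some $\Phi_d$ with $d\mid n$ divides $f$.

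For the direction ($\Leftarrow$), suppose $g(x)$ is an irreducible factor of $\Phi_d(x)$ in $\mathbb{F}_q[x]$ for some $d\mid n$, and $g\mid f$. Since $\Phi_d\mid x^n-1$ by the factorization above, we get $g\mid x^n-1$. Hence $g$ is a common divisor of $f$ and $x^n-1$ of positive degree, so $u\neq 1$ and $C$ is singular.

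For the direction ($\Rightarrow$), suppose $u\neq 1$. Then $u$ has positive degree, so it has a monic irreducible factor $g\in\mathbb{F}_q[x]$. Now $g\mid u\mid x^n-1=\prod_{d\mid n}\Phi_d(x)$. Since $g$ is irreducible, it is prime in the PID $\mathbb{F}_q[x]$, so $g\mid\Phi_d$ for some $d\mid n$. Thus $g$ is an irreducible factor of $\Phi_d$, and $g\mid u\mid f$, as required.

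Alternatively, one could argue via roots, in the spirit of the discussion preceding Theorem~\ref{thm:fb}. If $f(\zeta)=0$ for a primitive $d$-th root of unity $\zeta$, then by Proposition~\ref{prop:min} the minimal polynomial $g$ of $\zeta$ over $\mathbb{F}_q$ divides $f$. Also $g$ divides $\Phi_d$, because $\Phi_d\in\mathbb{F}_q[x]$ by Remark~\ref{rem:cyclotomic_reduction} and $\Phi_d(\zeta)=0$. I would present the gcd argument as the main proof and mention this as a remark.

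There is no real obstacle here. The only point needing care is that the identity $x^n-1=\prod_{d\mid n}\Phi_d(x)$ holds in $\mathbb{F}_q[x]$, not merely over $\overline{\mathbb{F}}_q$. This is where the hypothesis $\gcd(\mathrm{char}(\mathbb{F}_q),n)=1$ enters: it makes $x^n-1$ separable, so its roots are exactly the $n$ distinct $n$-th roots of unity, partitioned by their exact order $d\mid n$. Remark~\ref{rem:cyclotomic_reduction} then guarantees that each $\Phi_d$ has coefficients in $\mathbb{F}_q$.
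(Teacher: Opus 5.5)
Your proof is correct and follows essentially the same route as the paper. Both arguments combine Lemma~\ref{lem:rank_circulant_any_field} with the factorization $x^n-1=\prod_{d\mid n}\Phi_d(x)$ in $\mathbb{F}_q[x]$, and both place an irreducible factor of the gcd inside some $\Phi_d$ for the forward direction. The only difference is in the converse: the paper passes through a root $\omega$ of the irreducible factor and its minimal polynomial via Proposition~\ref{prop:min}, whereas your direct observation $g\mid\Phi_d\mid x^n-1$ reaches the same conclusion more economically.
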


\begin{pf}
Assume that $C$ is singular. Then, since
\begin{equation}
\label{eq:cyclopoly}
x^n-1 = \prod_{d \mid n} \Phi_d(x),
\end{equation}
we have

\[
\gcd (f(x), x^n-1) = \gcd \bigl(f(x), \prod_{d \mid n} \Phi_d(x)\bigr).
\]
By Lemma~\ref{lem:rank_circulant_any_field}, $\gcd(f(x),x^n-1)\ne 1$ and hence, there exists a divisor $d$ of $n$ such that
$\gcd(f(x), \Phi_d(x)) \neq 1$. Let $h(x)  = \gcd(f(x), \Phi_d(x))$.
Choose an irreducible factor $g(x)$ of $h(x)$; then $g(x)\mid f(x)$ and $g(x)\mid \Phi_d(x)$.

For the other direction, if some irreducible factor $h(x)$ of $\Phi_d(x)$ divides $f(x)$, then $h(\omega)=0$ for some root $\omega$ of $\Phi_d(x)$ and we have $\omega^n=1$ by (\ref{eq:cyclopoly}).
Since $h(x)\mid f(x)$, we also have $f(\omega)=0$.
Let $g(x)$ be the minimal polynomial of $\omega$ over $F$.
Since $\omega^n=1$, the polynomial $x^n-1$ has $\omega$ as a root. Therefore, by Proposition~\ref{prop:min}, the minimal polynomial $g(x)$ divides both $x^n-1$ and $f(x)$, which implies that $\gcd(f(x),x^n-1)\ne 1$.
\hfill $\square$
\end{pf}

Let $\overline{\mathbb{F}}_q$ be an algebraic closure of the finite field $\mathbb{F}_q$.
Fix an integer $n$ with $\gcd(n,q)=1$, and let $\zeta\in\overline{\mathbb{F}}_q^\times$ be a primitive $n$-th root of unity.
Consider the Frobenius automorphism
\[
\sigma:\overline{\mathbb{F}}_q\to\overline{\mathbb{F}}_q,\qquad x\mapsto x^{\,q}.
\]

Let $t\ge 1$ be the smallest integer such that $\sigma^{t}(\zeta)=\zeta$. Then $\zeta\in\mathbb{F}_{q^t}$, and the minimal polynomial of $\zeta$ over $\mathbb{F}_q$ is given by
\[
h_\zeta(x)=\prod_{i=0}^{t-1}\bigl(x-\zeta^{q^i}\bigr)\in\mathbb{F}_q[x].
\]

Note that $\mathrm{ord}_n(q)$ is the least $t\ge1$ such that $\zeta^{q^{t}}=\zeta$.

Since the roots of $\Phi_n(x)$ are precisely the primitive $n$-th roots of unity,
each irreducible factor of $\Phi_n(x)$ is the minimal polynomial over $\mathbb{F}_q$
of one such root.
Since the above degree $t$ depends only on $n$ and $q$ (and not on the choice of $\zeta$), every irreducible factor of $\Phi_n(x)$ over $\mathbb{F}_q$ has degree $t$ as well.
In particular, if $n$ is prime, then $\Phi_n(x)=1+x+\cdots+x^{n-1}$, so $\deg\Phi_n=n-1$.
Hence $\Phi_n(x)$ factors in $\mathbb{F}_q[x]$ as a product of $(n-1)/t$ distinct irreducible polynomials.

In summary, for a prime number $n$ with $\gcd(n,q)=1$, we immediately see that 
$\Phi_n(x)$ is irreducible if and only if
$\mathrm{ord}_n(q)=n-1$.
Now we obtain the following result. 

\begin{proposition}\label{pro:cyclo-irr}
Let $q=p^k$ be a prime power, and let $r$ be a prime with $\gcd(r,p)=1$. Consider
\[
\Phi_r(x)=1+x+\cdots+x^{r-1}\in \mathbb{F}_q[x].
\]
Then $\Phi_r(x)$ is irreducible over $\mathbb{F}_q$ if and only if
$
\mathrm{ord}_r(q)=r-1.
$
\end{proposition}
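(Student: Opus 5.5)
The plan is to identify the irreducible factors of $\Phi_r(x)$ over $\mathbb{F}_q$ with the minimal polynomials of primitive $r$-th roots of unity, and then to count degrees. Since $r$ is prime, $\Phi_r(x)=(x^r-1)/(x-1)=1+x+\cdots+x^{r-1}$ has degree $r-1$. Its roots in $\overline{\mathbb{F}}_q$ are exactly the $r-1$ elements $\zeta^s$ with $1\le s\le r-1$, where $\zeta$ is a primitive $r$-th root of unity. These roots are distinct because $\gcd(r,p)=1$ makes $x^r-1$ separable: its derivative $rx^{r-1}$ is nonzero and shares no root with it.

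Fix such a $\zeta$. By the discussion preceding the statement, its minimal polynomial over $\mathbb{F}_q$ is
\[
h_\zeta(x)=\prod_{i=0}^{t-1}\bigl(x-\zeta^{q^i}\bigr),
\]
where $t$ is the least positive integer with $\zeta^{q^t}=\zeta$. Equivalently, $r\mid q^t-1$, so $t=\mathrm{ord}_r(q)$. This is the orbit-size computation already carried out before Theorem~\ref{thm:fb}.

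Since $\Phi_r(\zeta)=0$ and $\Phi_r(x)\in\mathbb{F}_q[x]$ by Remark~\ref{rem:cyclotomic_reduction}, Proposition~\ref{prop:min} gives $h_\zeta(x)\mid\Phi_r(x)$. Every primitive $r$-th root $\zeta^s$ has the same order $r$, so each of its minimal polynomials also has degree $\mathrm{ord}_r(q)$. Because $\Phi_r$ is separable, it factors as a product of distinct such minimal polynomials, namely one for each Frobenius orbit. Hence $\Phi_r$ is a product of $(r-1)/\mathrm{ord}_r(q)$ distinct irreducible factors, each of degree $\mathrm{ord}_r(q)$.

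Both directions of the equivalence now follow from this factorization:
\begin{itemize}
\item $\Phi_r$ is irreducible if and only if it has exactly one irreducible factor.
\item That happens if and only if $\mathrm{ord}_r(q)=\deg\Phi_r=r-1$.
\end{itemize}
Concretely, if $\mathrm{ord}_r(q)=r-1$, then $h_\zeta$ is a monic divisor of the monic polynomial $\Phi_r$ of the same degree, so $h_\zeta=\Phi_r$ and $\Phi_r$ is irreducible. Conversely, if $\Phi_r$ is irreducible, then its monic irreducible divisor $h_\zeta$ must equal $\Phi_r$, so $\mathrm{ord}_r(q)=\deg h_\zeta=r-1$.

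The only step needing care is the claim that the degree of the minimal polynomial is exactly $\mathrm{ord}_r(q)$. This requires that the conjugates $\zeta^{q^i}$ for $0\le i<t$ are distinct, which holds because $\zeta^{q^i}=\zeta^{q^j}$ forces $q^i\equiv q^j\pmod r$. It also requires that $h_\zeta$ has coefficients in $\mathbb{F}_q$, which holds because $h_\zeta$ is fixed by $\sigma$. Both facts are standard and already recorded before the statement, so I expect no real obstacle.
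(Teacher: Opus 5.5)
Your proof is correct and follows essentially the same route as the paper: the irreducible factors of $\Phi_r$ are the minimal polynomials of the primitive $r$-th roots of unity. Each of these has degree equal to the Frobenius orbit size $\mathrm{ord}_r(q)$, so $\Phi_r$ is a product of $(r-1)/\mathrm{ord}_r(q)$ distinct irreducible factors, and it is irreducible if and only if $\mathrm{ord}_r(q)=r-1$. Your extra details, namely separability from $\gcd(r,p)=1$ and the direct degree comparison forcing $h_\zeta=\Phi_r$, only make explicit what the paper's discussion before the statement leaves implicit.
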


In the special case where $q=2$ and $r$ is an odd prime, Proposition~\ref{pro:cyclo-irr} recovers the well-known result (see, e.g.,~\cite{ln}) that
\[
1+x+\cdots+x^{r-1}\in \mathbb{F}_2[x]
\]
is irreducible if and only if $2$ is a primitive root modulo $r$ (equivalently, $\mathrm{ord}_r(2)=r-1$).

We assume that $r$ is an odd prime. Under the hypothesis $\mathrm{ord}_r(q)=r-1$, the factorization
\[
x^r-1=(x-1)\Phi_r(x)
\]
has exactly two distinct cyclotomic factors over $\mathbb{F}_q$, and Proposition~\ref{pro:cyclo-irr} guarantees that $\Phi_r(x)$ is irreducible in $\mathbb{F}_q[x]$.

\bigskip
\begin{theorem}\label{thm:prime-size}
Let $r$ be a prime and let $q$ be a prime power with $\gcd(r,q)=1$.
Assume $\mathrm{ord}_r(q)=r-1$. 
Then, $C = C(a_0,a_1,\dots,a_{r-1})$ over $\mathbb{F}_q$ is singular if and only if exactly one of the following three mutually exclusive conditions holds:
\begin{enumerate}
\item[\textnormal{(i)}] $a_0=\cdots=a_{r-1}=0$;
\item[\textnormal{(ii)}] $a_0=\cdots=a_{r-1}=c$ for some nonzero $c\in\mathbb{F}_q$;
\item[\textnormal{(iii)}]
$\sum_{j=0}^{r-1} a_j =0 \ \text{in }\mathbb{F}_q$, with some nonzero $a_j$.
\end{enumerate}
\end{theorem}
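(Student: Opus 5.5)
The plan is to apply Lemma~\ref{lem:rank_circulant_any_field} together with the factorization $x^r-1=(x-1)\Phi_r(x)$ in $\mathbb{F}_q[x]$. Since $\mathrm{ord}_r(q)=r-1$, Proposition~\ref{pro:cyclo-irr} tells us that $\Phi_r(x)$ is irreducible over $\mathbb{F}_q$. Because $\gcd(r,q)=1$, the factors $x-1$ and $\Phi_r(x)$ are distinct, since $\Phi_r(1)=r\ne 0$ in $\mathbb{F}_q$. Hence the only monic divisors of $x^r-1$ are $1$, $x-1$, $\Phi_r(x)$ and $x^r-1$. By the lemma, $C$ is singular if and only if $(x-1)\mid f(x)$ or $\Phi_r(x)\mid f(x)$.

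I will translate each of these two divisibilities into a condition on the coefficients. First, $(x-1)\mid f$ holds if and only if $f(1)=\sum_j a_j=0$. Second, suppose $\Phi_r\mid f$. Since $\deg f\le r-1=\deg\Phi_r$, this forces $f=c\,\Phi_r(x)$ for some $c\in\mathbb{F}_q$, that is, $a_0=\cdots=a_{r-1}=c$. Conversely, every constant first row gives $f=c\Phi_r$, which is divisible by $\Phi_r$. So $C$ is singular exactly when either $\sum_j a_j=0$ or all the $a_j$ are equal.

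It remains to reorganize these two conditions into the three mutually exclusive cases of the statement.
\begin{itemize}
\item The all-zero row is case (i), and it satisfies both conditions.
\item A constant row with $c\ne0$ is case (ii); note that its sum $rc$ is nonzero because $r\ne 0$ in $\mathbb{F}_q$.
\item A row with $\sum_j a_j=0$ and some $a_j\neq 0$ is case (iii).
\end{itemize}
Mutual exclusivity follows directly. Cases (i) and (iii) differ in whether some $a_j$ is nonzero. Cases (ii) and (iii) differ because the sum in case (ii) is $rc\neq 0$. Cases (i) and (ii) differ because $c\ne 0$. Conversely, each case visibly falls under one of the two divisibility conditions, so each implies singularity.

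The main point needing care is the distinctness of the factors $x-1$ and $\Phi_r$, together with the observation $rc\ne0$. Both rely only on $\gcd(r,q)=1$, so I do not expect any genuine obstacle. The edge case $r=2$ deserves a brief check: there $\mathrm{ord}_2(q)=1=r-1$ automatically, $\Phi_2=x+1$ is linear, and the argument goes through unchanged.
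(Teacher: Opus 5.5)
Your proposal is correct and follows essentially the same route as the paper: Lemma~\ref{lem:rank_circulant_any_field}, the factorization $x^r-1=(x-1)\Phi_r(x)$ with $\Phi_r$ irreducible by Proposition~\ref{pro:cyclo-irr}, the degree argument forcing $f=c\,\Phi_r(x)$, and the factor theorem for $x-1$. Your explicit checks that $x-1\neq\Phi_r(x)$ (because $\Phi_r(1)=r\neq 0$ in $\mathbb{F}_q$) and that case (ii) has sum $rc\neq 0$ justify the mutual exclusivity, which the paper only asserts.
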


\begin{pf}
Let $f(x)=a_0+a_1x+\cdots+a_{r-1}x^{r-1}\in\mathbb{F}_q[x]$
be the associated polynomial of $C$. By Lemma~\ref{lem:rank_circulant_any_field}, $C$ is singular over $\mathbb{F}_q$ if and only if $\gcd(f(x),x^r-1)\neq 1$ in $\mathbb{F}_q[x]$. 
Note that $x^r-1=(x-1)\Phi_r(x)=(x-1)(1+x+\cdots+x^{r-1})$ and that, under the condition $\mathrm{ord}_r(q)=r-1$, $\Phi_r(x)$ is irreducible over $\mathbb{F}_q$ by Proposition~\ref{pro:cyclo-irr}. 
A nonzero polynomial $f(x)$ cannot be divisible by $(x-1)\Phi_r(x)$ since $\deg f \le r-1<\deg((x-1)\Phi_r(x))$. Thus, any nontrivial common divisor of nonzero $f(x)$ and $x^r-1$ must be either $x-1$ or $\Phi_r(x)$.

Suppose that $C$ is singular. If $f(x)=0$, we obtain condition (i) ($a_0=\cdots=a_{r-1}=0$).
Assuming $f(x) \neq 0$, since $\gcd(f(x),x^r-1)\neq 1$, we must have $(x-1) \mid f(x)$ or $\Phi_r(x) \mid f(x)$.
Since $\deg \Phi_r(x) = r-1$, the condition $\Phi_r(x) \mid f(x)$ implies $f(x) = c\,\Phi_r(x)$ for some nonzero $c \in \mathbb{F}_q$.
This gives us two disjoint possibilities for nonzero $f$:
\begin{itemize}
\item $f(x) = c\,\Phi_r(x)$ for some $c \in \mathbb{F}_q \setminus \{0\}$, which is condition (ii);
\item $f(x) = g(x)(x-1)$ for some $g(x) \in \mathbb{F}_q[x]$, which is equivalent to $f(1) = \sum_{j=0}^{r-1} a_j = 0$, with some nonzero $a_j$, yielding condition (iii).
\end{itemize}

Conversely, if condition (i) or (ii) holds, $C$ is clearly singular. If condition (iii) holds, $f(1)=0$, which implies $(x-1) \mid f(x)$, so $\gcd(f(x),x^r-1) \neq 1$. 
\hfill $\square$
\end{pf}

\begin{remark}\label{rem:Artin_short2}
For a fixed integer $q$, a prime $r$ satisfying $\mathrm{ord}_r(q)=r-1$ is called an \emph{Artin prime} for $q$. 
Artin's primitive root conjecture predicts that if $q\neq -1$ and $q$ is not a perfect square, then
there exist infinitely many Artin primes for $q$ (see, e.g.,~\cite{Moree2004}).
In the setting of the preceding theorem, where $q=p^k$, this hypothesis amounts to requiring $k$ to be odd, since for even $k$, one has
$\mathrm{ord}_r(q)\mid (r-1)/2$ for all odd primes $r\nmid q$ (cf.~\cite{ire}).
\end{remark}

\section*{Acknowledgments}
The work was supported by the GIST Research Project grant funded by GIST in 2026 and the National Research Foundation of Korea (NRF) [Grant Number RS-2026-25476719].


\bibliographystyle{abbrv}

\bibliography{cas-refs}



\end{document}